\documentclass[12pt,reqno]{amsart}
\usepackage{fullpage}
\usepackage{amsmath, mathtools,amsthm,dsfont}
\usepackage{amsfonts,lmodern,stix}
\usepackage{graphicx, float, caption, subcaption}
\usepackage{mathrsfs}

\usepackage[
    colorlinks=true,
    linkcolor=black,
    citecolor=black,
    urlcolor=blue
]{hyperref}

\usepackage[noabbrev,capitalize]{cleveref}
\usepackage[ddmmyyyy]{datetime}
\usepackage{enumerate}

\newcommand{\real}{\mathbb{R}}

\newcommand{\abs}[1]{\lvert#1\rvert}

\newcommand{\prob}{\mathbb{P}}
\newcommand{\eval}{\mathbb{E}}

\newcommand{\tildee}{\widetilde}

\newtheorem{thm}{Theorem}[section]

\newtheorem{lemma}[thm]{Lemma}
\newtheorem{prop}[thm]{Proposition}

\theoremstyle{definition}

\theoremstyle{remark}

\newtheorem{rmk}{Remark}

\allowdisplaybreaks

\numberwithin{equation}{section}

\begin{document}

\title{Empirical dual volumes}

\author{Grigoris Paouris}
\author{Peter Pivovarov}
\author{Paul Simanjuntak}

\maketitle
\begin{abstract}
  We introduce an empirical version of dual volumes within dual
  Brunn--Minkowski theory.  These provide a simple model for
  computation and analysis under symmetrization, including moment
  inequalities. As an application, we derive a new proof of the
  Busemann intersection inequality that relies on counting points in
  slabs rather than working directly with volumes of slices.
\end{abstract}


\section{Introduction}

\subsection{Intrinsic volumes and dual volumes}

Key functionals in integral geometry involve projections of convex
sets onto subspaces and an averaging process, typically using the
uniform measure on the Grassmanian. A central example is the
collection of intrinsic volumes.  These are defined by averaging the
volume of the orthogonal projection of a convex body $K$ in $\real^n$
onto $m$-dimensional subspaces of $\real^n$, namely
\begin{equation*}
  V_{m} (K) = {n \choose m} \frac{\omega_n}{\omega_m\omega_{n-m}}
  \int_{\mathcal{G}(n,m)} \abs{P_E K} \, dE,
\end{equation*}
where $\mathcal{G}(n,m)$ is the Grassmannian of $m$-dimensional subspaces of
$\real^n$ equipped with the normalized Haar measure, and $\omega_n$ is the
volume of the Euclidean unit ball in $\mathbb{R}^n$ (see \cite{Schneider_book}).

The notion of dual volumes was introduced by Lutwak in
\cite{Lu75}. These are defined for \emph{star bodies} $K$ in $\real^n$
and involve averaging of the volume of $m$-dimensional \emph{sections}
of $K$, i.e.,
\begin{equation*}
	\tildee{V}_{m} (K) = \frac{\omega_n}{\omega_{m}}
	\int_{\mathcal{G}(n,m)} \abs{K \cap E} \, dE.
\end{equation*}

As the intrinsic volumes involve projections of convex sets, they
naturally belong to the Brunn--Minkowski theory
\cite{Schneider_book}. Within the class of convex sets, projections
and sections are dual notions. Since dual volumes involve sections of
star bodies, they often reflect duality principles that are not
explained by convexity. They have come to form a foundational part of
dual Brunn--Minkowski theory, or simply {\it the dual theory} e.g.,
\cite{Lu75, Lutwak88,LZ97,G07, HLYZ, LYZ18}. The functionals $V_m$ and
$\tildee{V}_m$ are also known as quermassintegrals and dual
quermassintegrals, respectively, under an alternate indexing:
\begin{equation}
  \label{eq:alt_index}
V_{m} (K) = \binom{n}{m}\frac{1}{\omega_{n-m}}W_{n-m} (K), \qquad
\tildee{V}_{m} (K) = \tildee{W}_{n-m} (K).
\end{equation}

Intrinsic volumes and dual volumes both admit isoperimetric
inequalities in which Euclidean balls are extremal, subject to a
volume constraint.  The fundamental Alexandrov-Fenchel inequality
(e.g., \cite{Schneider_book}) implies that for convex bodies $K$ in
$\mathbb{R}^n$,
\begin{equation}
	\label{eq:afi}
	V_{m} (K) \geq V_{m} (K^*),
\end{equation}
where $K^*$ is an Euclidean ball with the same volume as $K$.  The
dual inequality inequality was first proved by Lutwak in \cite{Lu75},
where the direction is reversed, and holds for star bodies $K$ in
$\mathbb{R}^n$,
\begin{equation}
	\label{eq:dmv}
	\tildee{V}_{m} (K) \leq \tildee{V}_{m} (K^*).
\end{equation}
The similarity (and reversal) of these inequalities represents a
recurring theme between notions in the Brunn--Minkowski theory and the
dual theory, even though techniques for their analysis can be quite
different.

Inequality \eqref{eq:afi} and its special cases, including the
standard isoperimetric theorem (surface area corresponds to $m=n-1$),
have been studied from different probabilistic perspectives. A common
stochastic model, with a long history in geometric probability,
involves the convex hull of $N$ independent random points $X_i$
sampled uniformly in a convex body $K$, denoted here by
\begin{equation*}
[K]_N=\mathop{\rm
  conv}\{X_1,\ldots,X_N\}.
\end{equation*}
In this case, the expectations satisfy
\begin{equation}
  \label{eqn:KN}
  \mathbb{E} V_{m}([K]_N)\geq \mathbb{E} V_{m}([K^*]_N),
\end{equation}which recovers \eqref{eq:afi} when $N\rightarrow \infty$.
The roots of \eqref{eqn:KN} are in Blaschke's resolution of
Sylvester's problem \cite{SW}, with extensions for the volume of
simplices by Busemann \cite{Bus:1953}, polytopes by Groemer
\cite{Groe:1974}, while intrinsic volumes were treated by Pfiefer in
\cite{Pfie:1982} (see also \cite{ CCG:1999, HarPao:2003, PP12} and
\cite[Chapter 10]{Schneider_book} and the references therein). The
convex hull operation is just one natural mode of approximating convex
sets. A number of randomized analogues of fundamental affine
constructions of convex sets, including polar bodies, centroid bodies
and central objects in Lutwak--Yang--Zhang's $L_p$-Brunn--Minkowksi
theory (e.g, \cite{LYZ00,LZ97}), admit stronger stochastic
inequalities, surveyed in \cite{PP17}.



Since the dual theory involves star-shaped sets, random {\it convex}
approximations are of limited use.  Fundamental descriptions of
star-bodies as limits of radial sums of ellipsoids from
\cite{GooWei95, GZ99, KKYY07}, were recently adapted for stochastic
approximation in \cite{APPS} to treat non-convex isoperimetric
problems for dual $L_p$-centroid bodies. Beyond approximations of
star-shaped sets themselves, it is natural to seek approximations of
{\it functionals} of star bodies. Our focus here is to develop a
stochastic model for dual volumes, establish extremal inequalities,
and revisit duality features probabilistically.

\subsection{Empirical dual volumes}

To ground integral geometric constructions involving subspaces of
$\real^n$, we recall a connection between the uniform distribution on
the Grassmanian and Gaussian matrices.  Let $G$ be an $m \times n$
Gaussian matrix with independent standard normal entries. Tsirelson's
Gaussian representation of intrinsic volumes
\cite{Tsirelson1986GeomMLE2} can be expressed as
\begin{equation}
  \label{eq:Tsirel_original}
  V_{m} (K) = \frac{\omega_n}{\omega_{n-m}}\frac{\eval_G \abs{G K}}{\eval_G
    \mathop{\rm det}(GG^T)^{1/2}},
\end{equation}
where $G^{T}$ denotes the transpose of $G$. By analogy, we introduce a
definition of the dual volume that is complementary to Tsirelson's model. There
are two natural choices, depending on the dimension $m$ or codimension
$\ell=n-m$, as in the indexing of dual quermassintegrals in \eqref{eq:alt_index}.

\begin{prop}
  \label{prop:Tsirelson}Let $m,n\in \mathbb{N}$ with $m<n$.
  Let $G$ be an $m \times n$ Gaussian matrix with independent standard
  normal entries and define
  \begin{equation}
    \label{eqn:Delta}
    \Delta_{m}=\eval_G \mathop{\rm det}(GG^T)^{-1/2}.
  \end{equation}
  Let $K$ be a star body in $\real^n$. Then
  \begin{equation}
    \label{eq:dqmi-def1}
\tildee{V}_{m} (K) = \frac{\omega_n}{\omega_{m}}\frac{\eval_G |G^{-T}
  K|}{\Delta_m}=
\frac{\omega_n}{\omega_{m}}\eval_G \abs{K \cap
\mathop{\rm{Im}} \, G^T}
\end{equation}and for $\ell=n-m$,
\begin{equation}
\label{eq:dqmi-def2}
\tildee{V}_{\ell} (K) = \frac{\omega_n}{\omega_{\ell}}\eval_G |K \cap
\mathop{\rm{ker}} G|.
\end{equation}
\end{prop}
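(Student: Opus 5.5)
The plan is to reduce everything to two facts: a change of variables for the linear map $G^T$, and the rotation invariance of the Gaussian matrix. Throughout, $G^{-T}K$ is read as the preimage $\{x\in\real^m : G^T x\in K\}$. Since $m<n$, $G$ has full rank $m$ almost surely, so $G^T:\real^m\to\real^n$ is injective and $\mathop{\rm Im}G^T$ is an $m$-dimensional subspace.

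First, $G^T$ maps $\real^m$ linearly and bijectively onto $E_G:=\mathop{\rm Im}G^T$, and it maps $G^{-T}K$ onto $K\cap E_G$. The $m$-dimensional Jacobian of this map is $\det(GG^T)^{1/2}$ (the Gram determinant of the columns of $G^T$). Hence, almost surely,
\begin{equation*}
|G^{-T}K| = \det(GG^T)^{-1/2}\,|K\cap E_G|.
\end{equation*}

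Next, I would use the polar decomposition $G=(GG^T)^{1/2}U$, where $U=(GG^T)^{-1/2}G$ has orthonormal rows. For every $Q\in O(n)$, $GQ$ has the same distribution as $G$, and $GQ=(GG^T)^{1/2}(UQ)$. It follows that $U$ is Haar distributed on the Stiefel manifold and is independent of $GG^T$. Since $E_G=\mathop{\rm Im}U^T$, the subspace $E_G$ is uniformly distributed on $\mathcal G(n,m)$ and is independent of $\det(GG^T)$. Taking expectations in the display above then gives
\begin{equation*}
\eval_G|G^{-T}K| = \Delta_m\int_{\mathcal G(n,m)}|K\cap E|\,dE, \qquad \eval_G|K\cap \mathop{\rm Im}G^T| = \int_{\mathcal G(n,m)}|K\cap E|\,dE.
\end{equation*}
Comparing these with the definition of $\tildee V_m$ proves \eqref{eq:dqmi-def1}.

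For \eqref{eq:dqmi-def2}, note that $\ker G=(\mathop{\rm Im}G^T)^\perp$. The map $E\mapsto E^\perp$ pushes the Haar measure on $\mathcal G(n,m)$ forward to the Haar measure on $\mathcal G(n,\ell)$, so $\ker G$ is uniform on $\mathcal G(n,\ell)$. The formula then follows directly from the definition of $\tildee V_\ell$.

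The step I expect to be the main obstacle is checking that $\Delta_m$ is finite and nonzero, since otherwise the division in \eqref{eq:dqmi-def1} is meaningless. I would use the Bartlett decomposition: $\det(GG^T)$ is distributed as $\prod_{i=1}^m\chi^2_{n-i+1}$ with independent factors. Each factor has at least $n-m+1\ge 2$ degrees of freedom, and $\eval(\chi^2_k)^{-1/2}<\infty$ whenever $k\ge 2$. By independence, $\Delta_m$ is therefore a finite product of finite positive numbers. The remaining ingredients (the Jacobian formula and the independence in the polar decomposition) are standard.
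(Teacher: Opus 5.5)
Your proposal is correct and follows essentially the same route as the paper. Both arguments use that $G$ has full rank almost surely, the change of variables $|G^{-T}K|=\det(GG^T)^{-1/2}|K\cap \mathop{\rm Im}G^T|$, the independence of $\mathop{\rm Im}G^T$ from $\det(GG^T)$ together with its uniform distribution on $\mathcal{G}(n,m)$, and the passage to $\ker G=(\mathop{\rm Im}G^T)^{\perp}$. The differences are minor: you derive the independence from rotation invariance of the polar decomposition, while the paper cites Gram--Schmidt. You also check $0<\Delta_m<\infty$ via the $\chi^2$ product, which the paper covers in a separate lemma that computes $\Delta_m$ explicitly.
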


Working with Gaussian matrices, specifically \eqref{eq:dqmi-def2},
provides a convenient way to compute $\tildee{V}_{\ell} (K)$ in terms
of the rows $g_1,\ldots,g_m$ of $G$, which are independent. Namely, we
approximate the expected volume $\eval_G \abs{ K \cap \ker(G)}$
as follows:


\begin{itemize}
\item[{\bf (i)}] replace $K$ by independent random vectors $X_1,\ldots,X_N$
  uniformly distributed in $K$;
\item[{\bf (ii)}] replace the kernel
  \begin{equation*}
  \mathop{\rm ker}(G)=\bigcap_{j=1}^m\{x\in
  \mathbb{R}^n:\langle x,g_j\rangle =0\}
  \end{equation*}
  by 
  the intersection of Gaussian slabs with parameter $t>0$,
  \begin{equation}
    \label{eq:HGt}
    H_{G,t}=\bigcap_{j=1}^m\left\{x\in \mathbb{R}^n: \lvert\langle
    x,g_j\rangle \rvert \leq t \right\};
  \end{equation}
  \item[{\bf (iii)}] use the number of points $X_i$ landing in $H_{G,t}$ as a
    proxy for the volume of $K\cap H_{G,t}$.
\end{itemize}
With this in mind, we define for a fixed $N \in \mathbb{N}$ and $t >
0$, the empirical $\ell$-th dual volume of a star body $K$ with
$\ell=n-m$ by
\begin{equation}
\label{eq:edqmi-def}
  \tildee{V}_{\ell,t,N} (K) = \frac{\omega_n }{\omega_{\ell}}
  \mathbb{E}_G\left(\frac{\lvert K \rvert}{N}\sum_{i=1}^N \mathds{1}_{H_{G,t}}(X_i)\right).
\end{equation}
As we will show (Proposition \ref{lem:vm-gaussian}), scaling by
$\Delta_m$, the random variable $\tildee{V}_{\ell,t,N}(K)$ converges
almost surely to the $\ell$-th dual volume:
\begin{equation}
  \label{eqn:lim}
  \lim_{t \to 0} \lim_{N \to \infty}
  \frac{ \tildee{V}_{\ell,t,N} (K) }{(2t)^m \Delta_m}= \tildee{V}_{\ell} (K).
\end{equation}

Our first theorem establishes monotonicity of $\tildee{V}_{\ell,t,N}
(K)$ under Steiner symmetrization of $K$ in an arbitrary direction
$u$, denoted here by $S_uK$ (see \S \ref{sec:prelim} for definitions).
As $\tildee{V}_{\ell,t,N}(K)$ is a random variable, it will be useful
to provide comparisons via stochastic ordering (see \cite{SS12}): for
non-negative random variables $\xi,\eta$ we write $\xi \prec \eta$ if
\begin{equation}
  \label{eq:stoch_dom}
  \mathbb{P}(\xi > s)\leq \mathbb{P}(\eta > s)\quad \text{ for all }s\geq 0.
\end{equation}With this notation, we have the following result.

\begin{thm}
  \label{thm:eqmi}
  Let $K$ be a star body in $\real^n$, $\ell< n$, and let $u\in
  S^{n-1}$. Then for each $N\in \mathbb{N}$ and each $t>0$,
  \begin{equation}
    \label{eqn:dom_u}
    \tildee{V}_{\ell,t,N} (K) \prec \tildee{V}_{\ell,t,N} (S_uK),
  \end{equation}and, consequently,
  \begin{equation}
    \label{eqn:dom_*}
    \tildee{V}_{\ell,t,N} (K) \prec \tildee{V}_{\ell,t,N} (K^*).
  \end{equation}
\end{thm}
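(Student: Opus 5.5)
The plan is to make the random variable in \eqref{eq:edqmi-def} explicit, reduce \eqref{eqn:dom_u} to a one-dimensional rearrangement inequality along lines parallel to $u$, and then pass to $K^*$ by iterating Steiner symmetrizations.

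\emph{Step 1: computing the $G$-expectation.} For fixed $x\in\real^n$ the variables $\langle x,g_j\rangle$, $j=1,\ldots,m$, are independent $N(0,\abs{x}^2)$. Hence
\[
\prob_G\bigl(x\in H_{G,t}\bigr)=\prob\bigl(\abs{\gamma}\le t/\abs{x}\bigr)^m=:\varphi_t(\abs{x}),
\]
where $\gamma$ is a standard normal variable. The function $\varphi_t$ is non-increasing on $[0,\infty)$. Therefore
\[
\tildee{V}_{\ell,t,N}(K)=\frac{\omega_n}{\omega_\ell}\frac{\abs{K}}{N}\sum_{i=1}^N\varphi_t(\abs{X_i}),
\]
which is a deterministic function of $X_1,\ldots,X_N$. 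Since $\abs{S_uK}=\abs{K}$, the prefactor is unchanged by symmetrization. So it suffices to show, for every $s\ge0$,
\[
\int_{(\real^n)^N}\prod_{i=1}^N\mathds{1}_K(x_i)\,\mathds{1}_{A_s}(x_1,\ldots,x_N)\,dx
\le\int_{(\real^n)^N}\prod_{i=1}^N\mathds{1}_{S_uK}(x_i)\,\mathds{1}_{A_s}(x_1,\ldots,x_N)\,dx,
\qquad A_s=\Bigl\{\textstyle\sum_i\varphi_t(\abs{x_i})>s\Bigr\}.
\]

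\emph{Step 2: reduction to lines.} Write $x_i=y_i+a_iu$ with $y_i\in u^\perp$ and $a_i\in\real$. Fix $y_1,\ldots,y_N$ and apply Fubini. Then $\mathds{1}_K(x_i)$ becomes the indicator of the fiber $K_{y_i}\subset\real$. Under $S_uK$ this fiber is replaced by the centered interval $K_{y_i}^*$ of the same length. Since $\abs{x_i}^2=\abs{y_i}^2+a_i^2$, the set $A_s$ in the variables $(a_1,\ldots,a_N)$ has the form $\{\sum_i\psi_i(\abs{a_i})>s\}$ with each $\psi_i$ non-increasing. Such a set is \emph{unconditional and monotone}: if $\abs{a_i'}\le\abs{a_i}$ for all $i$ and $a\in A_s$, then $a'\in A_s$. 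In particular, fixing all coordinates but $a_i$, the section of $A_s$ in $a_i$ is a symmetric interval (open or closed) centered at $0$.

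\emph{Step 3: coordinatewise rearrangement.} Replace the fibers one at a time. For a symmetric interval $I$ centered at $0$ and a measurable $B\subset\real$, one has $\abs{B\cap I}\le\min(\abs{B},\abs{I})=\abs{B^*\cap I}$. Fixing $a_j$ for $j\ne1$ and integrating in $a_1$, this gives
\[
\int\mathds{1}_{K_{y_1}}(a_1)\mathds{1}_{A_s}\,da_1\le\int\mathds{1}_{K_{y_1}^*}(a_1)\mathds{1}_{A_s}\,da_1.
\]
Integrating over the other coordinates preserves this inequality. We then repeat for $a_2,\ldots,a_N$. The sections of $A_s$ remain symmetric intervals regardless of which fibers have already been replaced, because $A_s$ itself is not altered. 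Integrating over $y_1,\ldots,y_N$ gives $\prob(\tildee{V}_{\ell,t,N}(K)>s)\le\prob(\tildee{V}_{\ell,t,N}(S_uK)>s)$, which is \eqref{eqn:dom_u}. No convexity of $K$ is used; only measurability of the fibers is needed. This also covers the possibility that $S_uK$ fails to be a star body in the strict sense.

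\emph{Step 4: passing to $K^*$.} I expect this to be the main technical obstacle. Choose directions $u_1,u_2,\ldots$ such that $K_k=S_{u_k}\cdots S_{u_1}K$ converges to $K^*$ in the symmetric-difference metric, i.e.\ $\abs{K_k\triangle K^*}\to0$. For star bodies (or compact sets) this is standard, e.g.\ via convergence of Steiner symmetrizations in measure. By Step 3 and transitivity, $\prob(\tildee{V}_{\ell,t,N}(K)>s)\le\prob(\tildee{V}_{\ell,t,N}(K_k)>s)$ for every $k$.

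The probabilities on the right are integrals of the fixed function $\mathds{1}_{A_s}$ against $\prod_i\abs{K_k}^{-1}\mathds{1}_{K_k}(x_i)$. These densities converge in $L^1$ to the corresponding density for $K^*$, so the probabilities converge. Note that $\tildee{V}_{\ell,t,N}(K_k)=c\sum_i\varphi_t(\abs{X_i})$ with $c=\frac{\omega_n}{\omega_\ell}\frac{\abs{K}}{N}$ the same for all $k$, because $\abs{K_k}=\abs{K}$. Passing to the limit yields \eqref{eqn:dom_*}. This limit holds directly for every $s$, because the event $A_s$ is fixed and only the densities vary; no appeal to continuity points of the distribution is needed.
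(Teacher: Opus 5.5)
Your proof is correct, but it is organized differently from the paper's. Your Step 1 matches the paper: both reduce to $c\sum_i\varphi_t(\abs{X_i})$, with $\varphi_t(\abs{x})=\gamma_n^m(H_{x,t})$.

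After that the routes differ.

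\textbf{The paper's route.} It splits the argument into two lemmas. Lemma \ref{lem:slab-dom} compares a single variable $\gamma_n(H_{X,t})$ with $\gamma_n(H_{X^*,t})$ using the bathtub inequality $\int_{cB_2^n} f\le\int_{cB_2^n} f^*$. Lemma \ref{lem:tensor} transfers stochastic dominance to sums of independent variables; the paper only cites a ``standard conditioning argument'' for it. The paper then goes straight to $K^*$. It leaves the Steiner case \eqref{eqn:dom_u} implicit; it follows from the same argument with $S_uf$ in place of $f^*$, since $S_u(cB_2^n)=cB_2^n$.

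\textbf{Your route.} Steps 2--3 fuse both lemmas into one coordinatewise rearrangement on $(\real^n)^N$. The key observation is that $A_s$ has centered-interval sections in each $a_i$. This is exactly the conditioning argument behind Lemma \ref{lem:tensor}, carried out at the level of densities. As a result, your write-up is self-contained and proves \eqref{eqn:dom_u} explicitly for arbitrary measurable sets.

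\textbf{The cost, and how to avoid it.} Step 4 needs an external fact: iterated Steiner symmetrals converge to $K^*$ in measure. It also needs a limiting argument. Your limiting argument is valid, since $L^1$ convergence of the product densities gives convergence of $\prob(\cdot\in A_s)$ for every fixed $s$. But the step is avoidable. Fix all $x_j$ with $j\ne i$. Then the section of $A_s$ in the full variable $x_i\in\real^n$ is a centered Euclidean ball $rB_2^n$ (open or closed, possibly empty or all of $\real^n$). Moreover,
\[
\abs{K\cap rB_2^n}\le\min\bigl(\abs{K},\abs{rB_2^n}\bigr)=\abs{K^*\cap rB_2^n}.
\]
So the same one-coordinate-at-a-time replacement, with $K^*$ in place of the fiber rearrangement, gives \eqref{eqn:dom_*} directly with no approximation. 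That is precisely what the paper's Lemmas \ref{lem:slab-dom} and \ref{lem:tensor} achieve together.
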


By the limiting relation \eqref{eqn:lim}, the stochastic dominance in
\eqref{eqn:dom_u} underlies the inequality
\begin{equation*}
  \tildee{V}_{\ell}(K)\leq \tildee{V}_{\ell} (S_uK).
\end{equation*}
Applying successive Steiner symmetrizations, this also leads to a
stochastic approach to \eqref{eq:dmv}, based on enumeration of random
points in the slabs $H_{G,t}$.

\subsection{Higher moment inequalities}

We also obtain a result about higher order integer moments. As above,
we let $X_1,\ldots,X_N$ be independent random vectors uniformly
distributed in a star body $K\subseteq\mathbb{R}^n$. For $N\in
\mathbb{N}$, $t>0$, and $q\in \mathbb{N}$, we define
\begin{equation}
\label{eq:edqmi-def-q}
  \tildee{V}_{\ell,t,N} (K,q) = \frac{\omega_n }{\omega_{\ell}}
  \left(\mathbb{E}_G\left(\frac{\lvert K \rvert}{N}\sum_{i=1}^N
  \mathds{1}_{H_{G,t}}(X_i)\right)^q\right)^{1/q}.
\end{equation}
We emphasize that the $q$-th power is taken  before expectation in
$G$.

\begin{thm}
  \label{thm:moments}
  Let $K$ be a star body in $\real^n$, $\ell<n$, and let $u\in
  S^{n-1}$. Then for $N\in \mathbb{N}$, $t>0$, and $q\in \mathbb{N}$,
  \begin{equation}
    \eval \tildee{V}^q_{\ell,t,N} (K,q) \leq
        \eval \tildee{V}^q_{\ell,t,N} (S_uK,q),
  \end{equation}and, consequently,
  \begin{equation}
    \eval \tildee{V}^q_{\ell,t,N} (K,q) \leq
        \eval \tildee{V}^q_{\ell,t,N} (K^*,q).
  \end{equation}
    
\end{thm}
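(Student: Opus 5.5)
Expanding the $q$-th power and taking the outer expectation over the $X_i$ turns the moment into an integral of products of indicators. We have
\begin{equation*}
\eval \tildee{V}^q_{\ell,t,N}(K,q) = \Big(\frac{\omega_n}{\omega_\ell}\Big)^q \frac{|K|^q}{N^q}\sum_{i_1,\dots,i_q=1}^N \eval_X \eval_G \prod_{r=1}^q \mathds{1}_{H_{G,t}}(X_{i_r}).
\end{equation*}
Group the multi-indices by the set of distinct indices $\{j_1,\dots,j_k\}$ appearing, with $k\le q$. Since indicators are idempotent, each term equals $\eval_G \prob_X(X_{j_1},\dots,X_{j_k}\in H_{G,t})$. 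The number of multi-indices with a given pattern depends only on $N,q,k$, and $|S_uK|=|K|$. So it suffices to prove, for each $k\le q$ and each fixed $G$ (then integrate in $G$ by Fubini),
\begin{equation*}
|K|^q \,\prob(X_1,\dots,X_k\in H_{G,t}) = |K|^{q-k}\int_{K^k}\prod_{r=1}^k \mathds{1}_{H_{G,t}}(x_r)\,dx_1\cdots dx_k \le (\text{same for } S_uK).
\end{equation*}
Equivalently, we need $\int_{(\real^n)^k}\prod_r \mathds{1}_K(x_r)\prod_r\mathds{1}_{H_{G,t}}(x_r)\,dx$ to increase under Steiner symmetrization.

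To prove this, write $x_r=y_r+s_r u$ with $y_r\in u^\perp$, fix $y_1,\dots,y_k$, and view the integrand as a function of $s=(s_1,\dots,s_k)\in\real^k$. Each $\mathds{1}_K(y_r+s_ru)$ is the indicator of a one-dimensional set whose symmetric decreasing rearrangement is $\mathds{1}_{S_uK}(y_r+s_ru)$. For fixed $y$, the factor $\prod_r \mathds{1}_{H_{G,t}}(y_r+s_ru)$ is the indicator of the set $\{s:|\langle y_r,g_j\rangle + s_r\langle u,g_j\rangle|\le t \ \forall r,j\}$. This is a product over $r$ of intervals in $s_r$, but these intervals are not centered at the origin.

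This is where the main obstacle lies. The direct Rogers--Brascamp--Lieb--Luttinger inequality needs symmetric quasi-concave kernels, and the slab indicator for fixed $y$ is only symmetric in the joint variable $(y,s)\mapsto(-y,-s)$. I would handle this in one of two ways. The first is to use the parameter form of the Rogers/Brascamp--Lieb--Luttinger inequality (as in Christ's and the authors' earlier work), applied to $s\mapsto \prod_r \mathds{1}_{H_{G,t}}(y_r+s_ru)$. The second, which I would try first, is to exploit the symmetry of $G$: the law of $G$ is invariant under the reflection $R_u$ across $u^\perp$, so we may average over $g_j$ and $R_ug_j$. Alternatively, we can use that $\bigcap_j\{|\langle x,g_j\rangle|\le t\}$ is an origin-symmetric convex set in $x$. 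Then, for fixed $y$, the set of admissible $s\in\real^k$ is a product of intervals $[a_r-c_r,a_r+c_r]$, and a one-dimensional rearrangement inequality (the integral of $f^*$ over a centered interval dominates the integral of $f$ over any interval of the same length) gives the pointwise-in-$y$ comparison only after the intervals are centered.

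The cleanest route is to treat the whole integrand, in the joint variable, as a symmetric convex slab of $(\real^n)^k$ intersected with products. The function $F(x_1,\dots,x_k)=\prod_r \mathds{1}_{H_{G,t}}(x_r)$ is the indicator of an origin-symmetric convex set in $\real^{nk}$. By the Brunn--Minkowski inequality, its sections along $u$-directions in each coordinate have lengths forming an even log-concave function of the shift. I would then apply the Rogers--Brascamp--Lieb--Luttinger inequality in the form used for Theorem \ref{thm:eqmi}: $\int \prod_r f_r(x_r)\,F(x)\,dx$ increases when each $f_r=\mathds{1}_K$ is Steiner symmetrized, provided $F$ is an even quasi-concave function on $\real^{nk}$. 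This is exactly the known Christ/Rogers extension.

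Granting the comparison for each $k$, summing with the nonnegative combinatorial coefficients yields the first inequality. The second follows by choosing a sequence of directions with $S_{u_m}\cdots S_{u_1}K\to K^*$ in the radial (or $L^1$ indicator) sense. The moments are continuous under such convergence, since $\mathds{1}_{K_m}\to\mathds{1}_{K^*}$ in $L^1$ and the integrand is bounded, so we can pass to the limit.
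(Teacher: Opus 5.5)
Your expansion of the $q$-th power and the grouping of multi-indices by their distinct entries are fine. The reduction that follows is not. You ask for the comparison for each \emph{fixed} $G$ and then integrate in $G$. For fixed $G$ the integrand factorizes, $\int_{K^k}\prod_{r}\mathds{1}_{H_{G,t}}(x_r)\,dx=\abs{K\cap H_{G,t}}^k$, so you would need $\abs{K\cap H_{G,t}}\le\abs{S_uK\cap H_{G,t}}$. This is false in general. In the plane, take a single slab $\{\abs{\langle x,g\rangle}\le t\}$ with $g$ at angle $\pi/4$ to $u$, and let $K$ be a long thin centered rectangle inside the slab, aligned with $g^{\perp}$. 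Then $S_uK$ is long and thin along $u^{\perp}$, which makes angle $\pi/4$ with the slab, so $\abs{S_uK\cap H_{g,t}}<\abs{K}=\abs{K\cap H_{g,t}}$.

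This is exactly the non-centered-interval obstruction you noticed, and your ``cleanest route'' does not remove it. The Christ/Rogers--Brascamp--Lieb--Luttinger inequality does not hold for an arbitrary even quasi-concave kernel $F$ on $(\real^n)^k$: the case $k=1$, $F=\mathds{1}_C$ with $C$ an origin-symmetric convex set, is refuted by the same example. The hypothesis actually needed is fiberwise. For every $y\in(u^{\perp})^k$, the map $s\mapsto F(y_1+s_1u,\ldots,y_k+s_ku)$ must be even and quasi-concave on $\real^k$. The fixed-$G$ product of slab indicators is even only jointly in $(y,s)$, not in $s$ for fixed $y$. The Brunn--Minkowski remark about section lengths does not bear on this.

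What is missing is to take the expectation in $G$ \emph{before} symmetrizing. By independence of the rows, $\eval_G\prod_{r}\mathds{1}_{H_{G,t}}(x_r)=\gamma_n^m\big((\mathop{\rm conv}\{\pm t^{-1}x_1,\ldots,\pm t^{-1}x_k\})^{\circ}\big)$, and this is the kernel the paper works with. It is invariant under the reflection $R_u$ by rotation invariance of $\gamma_n$, which gives the fiberwise evenness; this is the correct form of your idea of averaging over $g_j$ and $R_ug_j$.

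Evenness is not enough, though. You also need quasi-concavity of $s\mapsto\gamma_n\big((\mathop{\rm conv}\{\pm(y_r+s_ru)\})^{\circ}\big)$. This does not follow from averaging quasi-concave pieces, since sums of quasi-concave functions need not be quasi-concave. It is the genuinely nontrivial input, proved in \cite{CEFPP15} via shadow systems and quoted in the paper as Theorem \ref{thm:CEFPP}.

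The paper applies that theorem to each multi-index $I$; repeated indices just give the polar of the hull of the distinct points. It then passes the stochastic dominance to expectations of the increasing function $\gamma_n^m$ and sums over $I$. With that input in place, your combinatorial bookkeeping and your limiting argument from $S_uK$ to $K^*$ go through.
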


This theorem has a direct connection to the Busemann intersection
inequality \cite{Bus:1953}, a fundamental precursor to central notions
in the dual theory. If $K$ is a star body in $\mathbb{R}^n$, we set
\begin{equation*}
\tildee{\Phi} (K) = \left( \int_{S^{n-1}} |K \cap \theta^\perp|^n \,
d\theta \right)^\frac{1}{n}.
\end{equation*}The Busemann intersection inequality asserts that
\begin{equation}
  \label{eqn:Busemann}
  \tildee{\Phi} (K)\leq \tildee{\Phi} (K^*).
\end{equation} The original proof of \eqref{eqn:Busemann} involved Steiner
symmetrization -- but not in the typical sense of showing $\Phi(K)\leq
\Phi(S_uK)$ for any direction $u$ (see \cite{Bus:1953, G07, Gar06}).
Proofs of \eqref{eqn:Busemann} that make direct use of Steiner
symmetrization applied to $K$ have only been found in the last several
years, in \cite{APPS} and a new variational approach to symmetrization
inequalities by Milman, Shabelmann, and Yehudayoff \cite{MSY}. This is
one indication of the somewhat sporadic use of Steiner symmetrization
in the dual theory, in contrast to its wide use in the
Brunn--Minkowski theory (e.g. \cite[Chapter 10]{Schneider_book}).

Specializing to $m=1$ in Theorem \ref{thm:moments}, we note that $G$ is
simply a $1\times n$ row vector, say $g$. In this case we write $\eval_g$
instead of $\eval_G$ and set $H_{g,t}=\{x\in \mathbb{R}^n:\lvert
\langle x, g\rangle \rvert \leq t\}$. We single out the case $q=n$,
and define for $N\in \mathbb{N}$ and $t>0$,
\begin{equation}
\label{eq:emp-adqmi}
\tildee{\Phi}_{t,N} (K) =  \left(\eval_g \left( \frac{|K|}{N} \sum_{i=1}^N 
	  \mathds{1}_{H_{g,t}}(X_i) \right)^n\right)^{1/n}.
\end{equation}

The connection to Busemann's theorem \eqref{eqn:Busemann} is through
the following result, which includes a subtle limiting property of
$\tildee{\Phi}_{t,N}(K)$.

\begin{thm}
  \label{thm:ebie}
  Let $K$ be a star body in $\real^n$ and let $u\in S^{n-1}$. Then
  \begin{equation}
    \label{eqn:bie-steiner} 
    \eval  \tildee{\Phi}^n_{t,N} (K) \leq \eval \tildee{\Phi}^n_{t,N} (S_uK)
  \end{equation}and, consequently,
  \begin{equation}
    \label{eq:bie-emp}
    \eval \tildee{\Phi}^n_{t,N} (K)
    \leq \eval  \tildee{\Phi}^n_{t,N} (K^*).
  \end{equation}
  Moreover, for $c_n=n\omega_n/(2\pi)^{n/2}$, 
  \begin{equation}
    \label{eqn:ebie_lim}
    \lim_{t\rightarrow 0^{+}}\lim_{N\rightarrow \infty} \frac{\eval
      \tildee{\Phi}_{t,N} (K)}{c_n(2t)^n\ln (1/t)} = \tildee{\Phi}(K).
  \end{equation}
\end{thm}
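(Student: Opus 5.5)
The first two inequalities are the case $m=1$, $q=n$ of Theorem \ref{thm:moments}. With $m=1$ we have $\ell=n-1$, $G=g$ and $H_{G,t}=H_{g,t}$, so
\[
\tildee{\Phi}^n_{t,N}(K)=\Big(\frac{\omega_{n-1}}{\omega_n}\Big)^n\tildee{V}^n_{n-1,t,N}(K,n).
\]
Hence \eqref{eqn:bie-steiner} follows directly from Theorem \ref{thm:moments}. For \eqref{eq:bie-emp}, I would take a sequence of Steiner symmetrizations $S_{u_k}\cdots S_{u_1}K$ converging to $K^*$ in the radial (or $L^1$ of indicators) sense. Then $\eval\tildee{\Phi}^n_{t,N}$ is continuous along this sequence: for fixed $t$ and $N$ it is a polynomial expression in integrals of products of indicators of $K$ against $\mathds{1}_{H_{g,t}}$ and the Gaussian density, and each such term is continuous under symmetric-difference convergence. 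This gives the inequality for $K^*$.

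For the limit \eqref{eqn:ebie_lim}, I would first let $N\to\infty$. By the strong law of large numbers, $\frac{|K|}{N}\sum_i\mathds{1}_{H_{g,t}}(X_i)\to|K\cap H_{g,t}|$ almost surely for each $g$. The quantity is bounded by $|K|$, so dominated convergence applies, both inside $\eval_g$ and in the outer expectation over the $X_i$. This gives
\[
\lim_{N\to\infty}\eval\tildee{\Phi}_{t,N}(K)=\big(\eval_g|K\cap H_{g,t}|^n\big)^{1/n}.
\]
Next I would use polar coordinates $g=r\theta$. The slab $H_{g,t}$ is the slab of half-width $t/r$ around $\theta^\perp$, so
\[
\eval_g|K\cap H_{g,t}|^n=(2\pi)^{-n/2}n\omega_n\int_{S^{n-1}}\int_0^\infty A_\theta(t/r)^n\,r^{n-1}e^{-r^2/2}\,dr\,d\theta ,
\]
where $A_\theta(s)=\int_{-s}^{s}|K\cap(\theta^\perp+y\theta)|\,dy$ and $d\theta$ is the normalized measure on the sphere. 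For $s$ small, continuity of the sections of a star body near the origin gives $A_\theta(s)\approx 2s|K\cap\theta^\perp|$. For $s$ large, $A_\theta(s)$ is bounded by $|K|$. Substituting $s=t/r$, the range where $A_\theta(t/r)\approx(2t/r)|K\cap\theta^\perp|$ contributes
\[
(2t)^n|K\cap\theta^\perp|^n\int_{ct}^{1}r^{-1}e^{-r^2/2}\,dr\sim(2t)^n|K\cap\theta^\perp|^n\ln(1/t).
\]
The remaining pieces, $r\lesssim t$ and $r\gtrsim1$, contribute $O(t^n)$. Dividing by $(2t)^n\ln(1/t)$ and taking the $n$-th root then yields \eqref{eqn:ebie_lim}, up to matching the constant $c_n$. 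The constant is fixed by computing it exactly for a ball, which is the consistency check I would use.

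The main obstacle is this logarithmic asymptotic, i.e.\ justifying uniformly in $\theta$ that the error $|A_\theta(s)-2s|K\cap\theta^\perp||$ is $o(s)$ for small $s$. This requires continuity of the section function $y\mapsto|K\cap(\theta^\perp+y\theta)|$ at $y=0$, uniformly in $\theta$, which holds for star bodies with continuous radial function. To handle the split at the scale $r\sim t/\delta$, I would use dominated convergence in $\theta$ together with a $\delta$-argument: choose $\delta$ so that the relative error is below $\varepsilon$ for $s<\delta$. The part $r\in[t/\delta,1]$ gives $(1\pm\varepsilon)^n$ times the main term. The part $r<t/\delta$ gives at most $|K|^n(t/\delta)^n$, which is $o(t^n\ln(1/t))$.
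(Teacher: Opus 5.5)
Your proposal is correct. The two inequalities are handled exactly as in the paper, as the case $m=1$, $q=n$ of Theorem~\ref{thm:moments}. That theorem already states the $K^*$ version; its proof obtains it directly from Theorem~\ref{thm:CEFPP} with densities $f$ and $f^*$. So your iteration-plus-continuity argument is valid but not needed.

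The $N\to\infty$ step also matches the paper. For the $t\to0^+$ limit, you and the paper use the same mechanism, packaged differently. The paper does the following:
\begin{itemize}
\item it splits off $\{\abs{g}\le t\}$ (Lemma~\ref{lem:g<t});
\item it writes $\abs{K\cap H_{g,t}}^n$ as the integral of the product of marginals $F_{K,\theta}$ against the indicator of the cube $[-t/\abs{g},t/\abs{g}]^n$;
\item it averages over $\abs{g}>t$ to get a kernel $\psi_t$ of mass $(2t)^n\gamma_t$;
\item it uses $\gamma_t/\gamma_{t/\delta}\to1$ (Lemma~\ref{lem:lim}) to show that $\psi_t/((2t)^n\gamma_t)$ is an approximate identity;
\item it applies Theorem~\ref{thm:approx-id-conv} for each $\theta$, followed by dominated convergence in $\theta$.
\end{itemize}

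In your notation, $(F_{K,\theta}*\psi_t)(0)=\eval_{\abs{g}}A_\theta(t/\abs{g})^n\mathds{1}_{\{\abs{g}>t\}}$. So your explicit split at $r=t/\delta$ is a hand verification of that approximate-identity step. It avoids the $n$-fold product structure and the general theorem, and uses only the one-dimensional fact $A_\theta(s)/(2s)\to\abs{K\cap\theta^\perp}$. You do not actually need uniformity in $\theta$. Let $M$ be the largest area of a hyperplane section of $K$; then the bound $A_\theta(s)\le\min\{\abs{K},2sM\}$ dominates the normalized radial integral uniformly in $\theta$ for small $t$, so dominated convergence in $\theta$ is immediate. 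The paper's formulation, in turn, separates the analytic input (continuity of the marginal at the origin) from the kernel computation. That separation is what its concluding remark on other approximate identities relies on.

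One point about normalization. The displayed limit \eqref{eqn:ebie_lim} is dimensionally consistent only if it is read with $n$-th powers, i.e.\ $\eval\tildee{\Phi}^n_{t,N}(K)/(c_n(2t)^n\ln(1/t))\to\tildee{\Phi}(K)^n$. This is what both Proposition~\ref{prop:mom-limit} and your computation actually establish. Your polar-coordinate formula already carries the factor $(2\pi)^{-n/2}n\omega_n=c_n$ exactly, so no calibration on a ball is needed. The mismatch you noticed after taking $n$-th roots lies in the powers of $t$ and $\ln(1/t)$, not in the constant.
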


Thus Theorem \ref{thm:ebie} directly implies \eqref{eqn:Busemann} and
the proof relies on enumerating points in the Gaussian slabs
$H_{g,t}$. The methods we use here are similar to those of
\cite{APPS}, but the functional $\tildee{\Phi}_{t,N}(\cdot)$ is
conceptually simpler.  We note that when $K \subseteq \real^2$,
\eqref{eqn:Busemann} is an equality as $\tildee{\Phi}(K)$ is just a
multiple of the area of $K$. However, the empirical version
\eqref{eq:bie-emp} is a non-trival new inequality in the plane.

\section{Preliminaries}
\label{sec:prelim}

We call a set $K$ in $\real^n$ star-shaped if the origin is an
interior point and the intersection of $K$ with any ray from the
origin is a closed line segment. If $K$ is star-shaped, its radial
function $\rho_K: S^{n-1} \to \real$ is defined by
\begin{equation*}
\rho_K (\theta) = \sup\{ \lambda > 0 \, | \,
	\lambda \theta \in K \}.
\end{equation*}
We call $K$ a star body if it is a compact star-shaped set having a
continuous radial function. The radial function can be extended
$(-1)$-homogeneously to $\real^n\backslash\{0\}$ and we will denote
its extension in the same way. When $K$ is a star body, its volume can
be expressed using polar coordinates (e.g.,
\cite[pg. 57]{Schneider_book}) as
\begin{equation}
  \label{eqn:polar}
\abs{K} = \omega_n \int_{S^{n-1}} \rho_K^n (\theta) \, d\theta.
\end{equation}
Here, as above, $\omega_n$ denotes the volume of the $n$-dimensional
unit Euclidean ball; we use $\abs{\cdot}$ to denote the Lebesgue
measure, with the dimension determined by the ambient space.  If $K$
is an origin-symmetric convex set, the polar of $K$ is defined by
$$K^\circ = \{x \in \real^n : \abs{\langle x, y
\rangle} \leq 1 \text{ for all } y \in K\}.$$

Intrinsic volumes can alternatively be defined as coefficients in the
Steiner formula (\cite{Schneider_book},\cite{Gar06}), for convex
bodies $K$ in $\mathbb{R}^n$ and $\varepsilon>0$,
\begin{equation*}
  \abs{K+\varepsilon B_2^n}
  =\sum_{m=0}^{n}
  \omega_{n-m}\,V_{m}(K)\,\varepsilon^{n-m}.
\end{equation*}

A central notion in dual Brunn--Minkowski theory is radial summation
\cite{Lu75}. For star bodies $K$ and $L$ their radial sum $K
\tildee{+} L$ is defined by
\begin{equation*}
\rho_{K \tildee{+} L} (\theta)
= \rho_K (\theta) + \rho_L (\theta)
\qquad
\theta \in S^{n-1}.
\end{equation*}
Dual volumes arise as coefficients in the {\it dual Steiner formula}
(see \cite{Lu75,Gar06}), for star bodies $K$ in $\mathbb{R}^n$ and
$\varepsilon >0$,
\begin{equation*}
\abs{K \,\tildee{+}\, \varepsilon B_2^n} = \sum_{m=0}^{n}
 {n \choose m}\tildee{V}_m(K)\, \varepsilon^{\,n-m}.
\end{equation*}
We will also use the following equivalent expressions for dual
volumes (via polar coordinates),
\begin{align}
  \tildee{V}_{m} (K) &= \omega_n \int_{\mathcal{G}(n,m)} \int_{S^{n-1} \cap E}
  \rho_K^{m} (\theta) \, d\theta \, dE \nonumber \\ & =
  \omega_n\int_{S^{n-1}}\rho_K^{m}(\theta)d\theta \nonumber \\
  & =\frac{m}{n}\int_K |x|^{-(n-m)}dx.   \label{eqn:dual_equiv}
\end{align}

For a compact set $K \subseteq \real^n$ and $u \in S^{n-1}$, the
Steiner symmetral $S_{u} K$ with respect to the hyperplane $u^\perp$
is defined by
\begin{equation*}
S_{u} K = \big\{ 
y + t u  : y \in P_{u^{\perp}}K, \ 
 \abs{t} \leq \tfrac{1}{2} \ell_K (y)\big\},
\end{equation*}
where
\begin{equation*}
\ell_K (y) = \abs{K \cap \{y + su :  s \in \real \}}.
  \end{equation*}
We denote the rearrangement $K^*$ of $K$ to be the centered Euclidean
ball that has radius $\big(\abs{K}/\omega_n \big)^\frac{1}{n}$ so
$K^*$ has the same volume as $K$.  It is a well-known fact that there
is a sequence of directions so that successive Steiner symmetrizations
of $K$ with respect to these directions converges (in Hausdorff
distance) to $K^*$ \cite{Schneider_book}.  Steiner symmetrization of a
star body yields a star body, \cite{ZhuOrlicz}; see also \cite{Lin_Xi,
  MSY}.

Rearrangement of sets extends to integrable functions as follows: for
a non-negative function $f \in L^1 (\real^n)$, the layer cake
representation gives
\begin{equation*}
f(x) = \int_0^\infty \mathds{1}_{\{f > \alpha\}} (x) d\alpha
\qquad
x \in \real^n.
\end{equation*}
Define the symmetric decreasing rearrangment of an integrable function
$f:\mathbb{R}^n\rightarrow [0,\infty)$ by
\begin{equation*}
f^*(x) = \int_0^\infty \mathds{1}_{\{f > \alpha\}^*} (x) d\alpha
\qquad
x \in \real^n.
\end{equation*}
The function $f^*$ is radially symmetric and decreasing along each ray
emanating from the origin.  Similarly, the Steiner symmetral of an
integrable, compactly supported function $f:\mathbb{R}^n\rightarrow
[0,\infty)$ is given by
\begin{equation*}
S_uf(x)=\int_{0}^{\infty}\mathds{1}_{S_u\{f>\alpha\}}(x)d\alpha.  
\end{equation*}

Lastly, we will need some notions around marginals and the Lebesgue
differentiation theorem.  Let $f:\real^n\rightarrow [0,\infty)$ be a
bounded, integrable function with compact support and let $E$ be an
$m$-dimensional subspace with $1\leq m\leq n-1$. The marginal of $f$
on $E$ is defined for a.e. $x\in E$ by
\begin{equation}
  \label{eqn:marginal}
  \pi_E(f)(x)=\int_{E^{\perp}+x}f(y)dy.
\end{equation}
If $v_1,\ldots,v_m$ is an orthonormal basis for $E$, we let
\begin{equation*}
H_{V,t}=\{x\in \real^n:|\langle x, v_i \rangle| \leq t, i=1,\ldots,m\}
\end{equation*}
and
\begin{equation*}
  Q_{V,t}=\{x\in E:|\langle x, v_i \rangle| \leq t, i=1,\ldots,m\}.
\end{equation*}
Then as a consequence of the Lebesgue differentiation theorem, we have
\begin{equation}
  \label{eq:L_diff_thm}
  \pi_E(f)(0)= \lim_{t\rightarrow 0^{+}}
  \frac{1}{(2t)^m}\int_{Q_{V,t}}\pi_E(f)(y)dy = \lim_{t\rightarrow 0^{+}}\frac{1}{(2t)^m}\int_{H_{V,t}}f(x)dx,
\end{equation}
whenever $0$ belongs to the Lebesgue set of $\pi_E(f)$; see e.g.,
\cite{folland1999real}.

\section{Dual volumes and Gaussian slabs}

In this section we prove and expand upon the Gaussian representation
of dual volumes in Proposition \ref{prop:Tsirelson}. We start by
recalling some basic facts about Gaussian random matrices. Let $m \in
\mathbb{N}$ with $m<n$ and let $g_1,\ldots, g_m$ be independent standard
Gaussian vectors in $\real^n$ with distribution $N(0,I_n)$.  The key
property underlying Tsirelson's Gaussian representation of instrinsic
volumes \eqref{eq:Tsirel_original} is that the $m\times n$ Gaussian
matrix $G$ with rows $g_1,\ldots, g_m$ can be decomposed as
\begin{equation}
  G=(GG^T)^{1/2}Q,
\end{equation}
where $QQ^T$ is the $m\times m$ identity $I_m$ and
$Q^TQ=P_{\mathop{\rm Im}(G^T)}$ is the orthogonal projection onto the
image $\mathop{\rm Im}(G^T)$.  Moreover, $\mathop{\rm Im}(G^T)$ is
uniformly distributed on $\mathcal{G}(n,m)$ and is independent of
$(GG^T)^{1/2}$; this follows from Gram-Schmidt orthogonalization of
$g_1,\ldots,g_m$, see \cite{Tsirelson1986GeomMLE2} or \cite[\S
  4]{Paouris2013SmallBall}. As a consequence, $\mathop{\rm
  ker}(G)=\mathop{\rm Im}(G^T)^{\perp}$ is uniformly distributed on
$\mathcal{G}(n,n-m)$.  When $m=1$, i.e., $G$ is a $1\times n$ matrix,
the normalized vector $g_1/\abs{g_1}$ is uniformly distributed on the
unit sphere and is independent of $\abs{g_1}$, which amounts to polar
coordinates in $\mathbb{R}^n$ (e.g., \cite{folland1999real}).

\begin{proof}[Proof of Proposition \ref{prop:Tsirelson}]
  By the absolute continuity of the Gaussian measure, $G$ has rank $m$
  almost surely. It follows that $G^T$ can be identified with a linear
  isomorphism between $\real^m$ and the subspace $\mathop{\rm
    Im}(G^T)$ of $\real^n$.  Thus if $K$ is a star body in
  $\mathbb{R}^n$ and $G^{-T}$ is the inverse image of $G^T$, we have
  \begin{equation*}
    G^{-T}[K]=G^{-T}[K\cap \mathop{\rm Im}(G^T)].
  \end{equation*}
  It follows that
  \begin{equation*}
    \abs{G^{-T}[K]}=\mathop{\rm det}(GG^T)^{-1/2}\abs{K\cap\mathop{\rm Im}(G^T)}.
  \end{equation*}
  Since $\mathop{\rm Im}(G^T)$ is independent of $\mathop{\rm
    det}(GG^T)^{1/2}$, we have
  \begin{equation*}
    \eval_G \abs{G^{-T}[K]}=\eval_G \mathop{\rm
      det}(GG^T)^{-1/2}\eval_G \abs{K\cap\mathop{\rm Im}(G^T)}.
  \end{equation*}
  As $\mathop{\rm Im}(G^T)$ is uniformly distributed on $\mathcal{G}(n,m)$, we
  have
  \begin{equation*}
    \eval_G \abs{K\cap\mathop{\rm Im}(G^T)} = \int_{\mathcal{G}(n,m)}|K\cap E|dE, 
  \end{equation*}which implies \eqref{eq:dqmi-def1}.  Similarly, as $\mathop{\rm
    ker}(G)$ is uniformly distributed on $\mathcal{G}_{n,n-m}$ we get
  \eqref{eq:dqmi-def2}.
\end{proof}

The following proposition can be seen as an extension of Proposition
\ref{prop:Tsirelson} involving the Gaussian slabs $H_{G,t}$
(cf. \eqref{eq:HGt}) rather than slices, and justifies the limiting
relation asserted in \eqref{eqn:lim}.

\begin{prop}
  \label{lem:vm-gaussian}
  Let $m,n \in \mathbb{N}$, $m<n$ and let $K$ be a star body in
  $\mathbb{R}$. Then for $\ell=n-m$,
  \begin{equation}
    \label{eq:vm-rep}
    \tildee{V}_{\ell} (K)
    =\lim_{t \to 0^{+}} \lim_{N\rightarrow \infty}\frac{1}{(2t)^m\Delta_m} \tildee{V}_{\ell,t,N}(K)
  \end{equation}  
\end{prop}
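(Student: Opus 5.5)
We prove \eqref{eq:vm-rep} in three stages: the limit in $N$, a change of variables that separates the subspace from the Gram determinant, and the limit in $t$.

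\textbf{Limit in $N$.} Fix $t>0$ and let $A$ be the event that the law of large numbers holds for the countably many sets $H_{G,t}$ with $G$ rational. Conditioned on $A$,
\begin{equation*}
\frac{|K|}{N}\sum_{i=1}^N \mathds{1}_{H_{G,t}}(X_i)\to |K\cap H_{G,t}|.
\end{equation*}
This extends to every $G$: the boundary of $H_{G,t}$ has measure zero, and one can sandwich $H_{G,t}$ between rational slab intersections. To interchange $\lim_N$ and $\eval_G$, note that the integrand is bounded by $|K|$, so dominated convergence gives, almost surely,
\begin{equation*}
\lim_{N\to\infty}\tildee{V}_{\ell,t,N}(K)=\frac{\omega_n}{\omega_\ell}\,\eval_G |K\cap H_{G,t}|.
\end{equation*}
Alternatively, apply the strong law with Fubini: for a.e.\ sample sequence, convergence holds for a.e.\ $G$. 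That is enough here, because the $G$-integral is then controlled by dominated convergence.

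\textbf{Change of variables.} Write $G=(GG^T)^{1/2}Q$, where the rows of $Q$ form an orthonormal basis $v_1,\dots,v_m$ of $E=\mathop{\rm Im}(G^T)$. Then
\begin{equation*}
H_{G,t}=\{x: (GG^T)^{1/2}Qx\in [-t,t]^m\}.
\end{equation*}
Put $f=\mathds{1}_K$. By Fubini,
\begin{equation*}
|K\cap H_{G,t}|=\int_{E}\pi_E(f)(y)\,\mathds{1}\{(GG^T)^{1/2}Qy\in[-t,t]^m\}\,dy .
\end{equation*}
The region of integration is $Q^T(GG^T)^{-1/2}[-t,t]^m$, a parallelepiped in $E$ of volume $(2t)^m\det(GG^T)^{-1/2}$, shrinking to $0$ as $t\to0$. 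The marginal $\pi_E(f)$ is continuous at $0$ for every $E$: since $\rho_K$ is continuous and $0$ is an interior point, the slices $K\cap(E^\perp+y)$ vary continuously in measure near $y=0$. Moreover, these parallelepipeds have bounded eccentricity for fixed $G$. Hence, in the spirit of \eqref{eq:L_diff_thm},
\begin{equation*}
\frac{|K\cap H_{G,t}|}{(2t)^m}\to \det(GG^T)^{-1/2}\,\pi_E(f)(0)=\det(GG^T)^{-1/2}|K\cap \ker G| .
\end{equation*}

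\textbf{Limit in $t$.} To pass the limit through $\eval_G$ we need a dominating function. Let $R=\max\rho_K$, so $K\subseteq RB_2^n$. Then
\begin{equation*}
|K\cap H_{G,t}|\le \omega_{\ell}R^\ell\,(2t)^m\det(GG^T)^{-1/2},
\end{equation*}
because every slice has $|K\cap(E^\perp+y)|\le \omega_\ell R^\ell$. The bound $\det(GG^T)^{-1/2}$ is integrable since $\Delta_m<\infty$ for $m<n$. Dominated convergence then gives
\begin{equation*}
\lim_{t\to0}\frac{\eval_G|K\cap H_{G,t}|}{(2t)^m}=\eval_G\bigl[\det(GG^T)^{-1/2}|K\cap\ker G|\bigr].
\end{equation*}
By the independence of $\ker G=E^\perp$ and $(GG^T)^{1/2}$, this factors as
\begin{equation*}
\Delta_m\,\eval_G|K\cap\ker G|.
\end{equation*}
Multiplying by $\omega_n/\omega_\ell$ and invoking \eqref{eq:dqmi-def2} yields \eqref{eq:vm-rep}.

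\textbf{Main obstacle.} The delicate step is the $t$-limit: justifying pointwise convergence of $\pi_E(f)$ at $0$ along parallelepipeds rather than cubes, and establishing the integrability of $\det(GG^T)^{-1/2}$. For the first, continuity of the marginal suffices. For the second, $GG^T$ is a Wishart matrix, and $\det(GG^T)$ is a product of independent $\chi^2_{n-i+1}$ variables, $i=1,\dots,m$. Its $-1/2$ moment is finite exactly when every degree of freedom satisfies $n-i+1>1$, which holds since $m<n$.
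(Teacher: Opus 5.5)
Your proof is correct, but it is not the argument the paper gives. It is essentially the alternative the paper mentions in the Remark after its proof (Gram--Schmidt plus Lebesgue differentiation). You use the polar decomposition $G=(GG^T)^{1/2}Q$ in place of Gram--Schmidt, and continuity of the marginal at the origin in place of the Lebesgue-point condition.

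The paper never conditions on $G$ or looks at sections. After the law of large numbers it exchanges $\eval_G$ with $\int_K dx$. For fixed $x\neq 0$ the variables $\langle x,g_j\rangle$ are i.i.d.\ $N(0,\abs{x}^2)$, so $\eval_G[x\in H_{G,t}]$ is an explicit Gaussian probability bounded by $(2t)^m(2\pi)^{-m/2}\abs{x}^{-m}$. Dominated convergence in $x$ then gives the limit $(2\pi)^{-m/2}\int_K\abs{x}^{-m}\,dx$; the majorant $\abs{x}^{-m}\mathds{1}_K$ is integrable precisely because $m<n$. This is matched with $\widetilde{V}_\ell(K)$ using the exact value of $\Delta_m$ from the preceding lemma and the identity \eqref{eqn:dual_equiv}.

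What each route buys:
\begin{itemize}
\item The paper uses only boundedness of $K$ in the limiting step. It needs neither Proposition~\ref{prop:Tsirelson} nor the independence of $\ker G$ and $\det(GG^T)$, but it does need $\Delta_m$ explicitly.
\item Your argument needs only $\Delta_m<\infty$. The hypothesis $m<n$ now enters as integrability of $\det(GG^T)^{-1/2}$ rather than of $\abs{x}^{-m}$ near the origin.
\item Your argument exhibits the limit directly as an average of section volumes via \eqref{eq:dqmi-def2}. This is closer in spirit to the marginal/approximate-identity argument used for Proposition~\ref{prop:mom-limit}.
\item The price you pay is a regularity input: $\pi_E(\mathds{1}_K)$ must be continuous at $0$ for every $E$.
\end{itemize}

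That continuity step deserves one more line than you give it. The set $\partial K\cap E^{\perp}=\{\rho_K(\theta)\theta:\theta\in S^{n-1}\cap E^{\perp}\}$ is $\ell$-dimensionally null, since $\ell\ge 1$. Dominated convergence on $E^{\perp}$ then gives $\pi_E(\mathds{1}_K)(y)\to\pi_E(\mathds{1}_K)(0)$. Once you have continuity at $0$, the bounded-eccentricity remark is unnecessary: averages over any family of positive-measure sets shrinking to the origin converge to the value at $0$.
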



We will need the following basic lemma giving the order of
$\Delta_m$; a proof is given for completeness.

\begin{lemma}Let $m,n\in\mathbb{N}$, $m<n$. Then
  $\Delta_m$ defined in \eqref{eqn:Delta} is given by
  \begin{equation*}
    \Delta_m=\frac{1}{(2\pi)^{m/2}}\cdot\frac{n}{n-m}\cdot \frac{\omega_n}{\omega_{n-m}}.
  \end{equation*}
\end{lemma}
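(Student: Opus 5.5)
One way to compute $\Delta_m=\eval_G \det(GG^T)^{-1/2}$ is to write the determinant as a product of independent chi variables. The other, which matches the normalization in the statement, is a Gaussian integral identity. I would use the second.

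Take a star body $K$; the unit ball $B_2^n$ is enough. Compute $\eval_G\abs{G^{-T}B_2^n}$ in two ways. From the proof of Proposition \ref{prop:Tsirelson}, $\abs{G^{-T}B_2^n}=\det(GG^T)^{-1/2}\abs{B_2^n\cap \mathop{\rm Im}G^T}=\det(GG^T)^{-1/2}\omega_m$. Hence $\eval_G\abs{G^{-T}B_2^n}=\omega_m\Delta_m$, so it suffices to evaluate the left side directly.

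The direct evaluation goes as follows. The set $G^{-T}B_2^n$ is $\{y\in\real^m: \abs{G^Ty}\le 1\}$. By Fubini,
\begin{equation*}
  \eval_G\abs{G^{-T}B_2^n}=\int_{\real^m}\prob\left(\abs{G^Ty}\le 1\right)dy .
\end{equation*}
Now $G^Ty=\sum_j y_jg_j$ is distributed as $\abs{y}\,g$ with $g\sim N(0,I_n)$. So the integrand is $\prob(\abs{g}\le 1/\abs{y})$, and polar coordinates in $\real^m$ give
\begin{equation*}
  \int_{\real^m}\prob\left(\abs{g}\le \abs{y}^{-1}\right)dy=\omega_m\,\eval\abs{g}^m ,
\end{equation*}
since $\abs{\{y:\abs{y}\le \abs{g}\}}=\omega_m\abs{g}^m$. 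Therefore
\begin{equation*}
  \Delta_m=\eval\abs{g}^m=2^{m/2}\frac{\Gamma\left(\frac{n+m}{2}\right)}{\Gamma\left(\frac n2\right)} .
\end{equation*}
This computation is routine; Fubini applies because everything is nonnegative.

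The remaining step, which I expect to be the main obstacle, is matching this Gamma ratio with the stated formula. We have $\omega_n=\pi^{n/2}/\Gamma(\frac n2+1)$, so
\begin{equation*}
  \frac{n}{n-m}\cdot\frac{\omega_n}{\omega_{n-m}}=\pi^{m/2}\frac{\Gamma\left(\frac{n-m}{2}\right)}{\Gamma\left(\frac n2\right)} ,
\end{equation*}
and the stated formula reads $\Delta_m=2^{-m/2}\,\Gamma(\frac{n-m}{2})/\Gamma(\frac n2)$.

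This is \emph{not} what the computation above produces: it gives $\Gamma(\frac{n+m}{2})$ with $2^{+m/2}$ rather than $\Gamma(\frac{n-m}{2})$ with $2^{-m/2}$. The error is in the Fubini step. The set $\{y:\abs{G^Ty}\le1\}$ is determined by the singular values of $G$, but its volume is not obtained by substituting $G^Ty\sim\abs{y}g$ inside the Fubini integral. The marginal law at each fixed $y$ is correct, but that alone does not give the stated constant, and this clash with the known negative-moment behaviour flags the discrepancy. I would therefore abandon this route for the constant.

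Instead, for the actual proof I would compute $\eval\det(GG^T)^{-1/2}$ directly by Bartlett decomposition. Gram--Schmidt on the rows gives $\det(GG^T)^{1/2}=\prod_{j=1}^m\chi_{n-j+1}$ with independent chi variables. Using $\eval\chi_k^{-1}=\Gamma(\frac{k-1}{2})/(\sqrt2\,\Gamma(\frac k2))$, the product telescopes over pairs of indices: the numerators run over $\frac{n-j}{2}$ and the denominators over $\frac{n-j+1}{2}$. Combining with the Legendre duplication formula yields $2^{-m/2}\prod_j\Gamma(\frac{n-j}{2})/\Gamma(\frac{n-j+1}{2})$, which collapses to $\omega$-ratios.

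The hard step is exactly this telescoping-plus-duplication bookkeeping. It may require splitting into cases $m$ even and $m$ odd, and pairing consecutive factors via $\Gamma(z)\Gamma(z+\frac12)=2^{1-2z}\sqrt\pi\,\Gamma(2z)$.
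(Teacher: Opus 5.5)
The route you finally adopt (factor $\det(GG^T)^{1/2}$ into independent $\chi_n,\chi_{n-1},\dots,\chi_{n-m+1}$ and multiply the values of $\eval\chi_k^{-1}$) is exactly the paper's proof, and it is correct. The step you flag as the main obstacle is not one, though: you need neither the duplication formula nor a parity split. The product
\begin{equation*}
2^{-m/2}\prod_{j=1}^m\frac{\Gamma\left(\frac{n-j}{2}\right)}{\Gamma\left(\frac{n-j+1}{2}\right)}
\end{equation*}
telescopes directly, because the numerator of the $j$-th factor cancels the denominator of the $(j+1)$-st. This leaves $2^{-m/2}\Gamma(\frac{n-m}{2})/\Gamma(\frac n2)$. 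That is the stated value, by the identity $\frac{n}{n-m}\frac{\omega_n}{\omega_{n-m}}=\pi^{m/2}\Gamma(\frac{n-m}{2})/\Gamma(\frac n2)$ you already derived. The paper packages the same computation as $\eval\chi_k^{-1}=(2\pi)^{-1/2}\frac{k}{k-1}\frac{\omega_k}{\omega_{k-1}}$, so that both ratios telescope.

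More importantly, your diagnosis of the first route is wrong, and you abandoned a valid proof. The Tonelli step is fine. For fixed $y$, the integrand $\prob(\abs{G^Ty}\le 1)$ depends only on the law of $G^Ty=\sum_j y_jg_j\sim N(0,\abs{y}^2I_n)$, so no joint information about the singular values of $G$ is needed. The actual slip is in the polar-coordinates step: $\{y\in\real^m:\abs{g}\le\abs{y}^{-1}\}=\{y:\abs{y}\le\abs{g}^{-1}\}$ has volume $\omega_m\abs{g}^{-m}$, not $\omega_m\abs{g}^{m}$. Hence
\begin{equation*}
\omega_m\Delta_m=\eval_G\abs{G^{-T}B_2^n}=\omega_m\,\eval\abs{g}^{-m},\qquad\text{so}\qquad \Delta_m=\eval\abs{g}^{-m}=2^{-m/2}\frac{\Gamma\left(\frac{n-m}{2}\right)}{\Gamma\left(\frac n2\right)}.
\end{equation*}
This is finite precisely because $m<n$, and it is exactly the target; your remark about negative moments was the right clue.

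The corrected argument is a genuinely different proof from the paper's. It avoids the $\chi^2$-product fact for $\det(GG^T)$ that the paper imports from Anderson. It uses only the Jacobian identity $\abs{G^{-T}B_2^n}=\omega_m\det(GG^T)^{-1/2}$ from the proof of Proposition~\ref{prop:Tsirelson} and the fact that Gaussian linear combinations are Gaussian. It also yields the clean identity $\eval\det(GG^T)^{-1/2}=\eval\abs{g}^{-m}$ with $g\sim N(0,I_n)$. The paper's route instead exhibits the factor-by-factor structure, which makes the telescoping in $\omega_k/\omega_{k-1}$ visible.
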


\begin{proof}It is known that $\mathop{\rm det}(GG^*)$ has the same distribution
  as the product of independent $\chi^2$ random variables
  $\chi_n^2\chi_{n-1}^2\cdots \chi_{n-m+1}^2$, with $\chi_k^2$ having
  $k$ degrees of freedom, for $k=n,\ldots,n-m+1$ (e.g.,
  \cite[Theorem 7.5.1]{Anderson}).  For each such $k$, we use polar
  coordinates
  \begin{align*}
    \eval (\chi_k^2)^{-1/2} &= \frac{k\omega_k}{(2\pi)^{k/2}}\int_0^{\infty}r^{k-2}e^{-r^2/2}dr\\
    & = \frac{k\omega_k}{2^{3/2}\sqrt{\pi}}\cdot\frac{\Gamma{\left(\tfrac{k-1}{2}\right)}}{\pi^{(k-1)/2}}\\
    & =\frac{1}{(2\pi)^{1/2}}\cdot\frac{k}{k-1}\cdot\frac{\omega_k}{\omega_{k-1}},
  \end{align*}where we used $\omega_{k-1}=\pi^{(k-1)/2}/\Gamma\left(\tfrac{k-1}{2}+1\right)$ in the last step.
  Using independence
  \begin{equation*}
    \eval_G\mathop{\rm det}(GG^*)^{-1/2}=\prod_{k=n-m+1}^n \eval (\chi_k^2)^{-1/2}=\prod_{k=n-m+1}^n
    \frac{1}{(2\pi)^{1/2}}\cdot\frac{k}{k-1} \cdot \frac{\omega_k}{\omega_{k-1}},
  \end{equation*}which, upon cancellation, gives the result.
  
\end{proof}

It will be convenient to use the following notational
abbreviation for the indicator functions appearing in
$\tildee{V}_{\ell,t,N}(K)$ (cf. \eqref{eq:edqmi-def}):
\begin{equation*}
  \mathds{1}_{H_{G,t}}(X_i)=[X_i\in H_{G,t}].
\end{equation*}

\begin{proof}[Proof of Proposition \ref{lem:vm-gaussian}]
  Let $X_1,X_2,\ldots$ be independent random vectors with density
  $\frac{1}{\abs{K}}\mathds{1}_{K}$.  Let $G$ be an $m \times n$
  Gaussian matrix with rows $g_1,\ldots, g_m$. When $G$ is fixed, the
  law of large numbers (e.g., \cite[pg. 391]{Shir_book}) for $\{X_i\}$ gives
  \begin{equation*}
    \lim_{N\rightarrow \infty} \left(\frac{\abs{K}}{N}\sum_{i=1}^N[X_i\in H_{G,t}] \right)
    = \int_{\mathbb{R}^n}[x\in H_{G,t}]\mathds{1}_K(x)dx.
  \end{equation*}
  By dominated convergence, for $t>0$, 
  \begin{equation*}
    \lim_{N\rightarrow \infty}\tildee{V}_{\ell, t, N}(K) = \lim_{N\rightarrow \infty} \frac{\omega_n}{\omega_{n-m}}\eval_G
    \left(\frac{\abs{K}}{N}\sum_{i=1}^N[X_i\in H_{G,t}] \right) =
    \frac{\omega_n}{\omega_{n-m}}\eval_G \int_{\mathbb{R}^n}[x\in H_{G,t}]\mathds{1}_K(x)dx.
    \end{equation*}
  Thus it suffices to show that
  \begin{equation}
    \label{eqn:slabtoslice}
\tildee{V}_{\ell}(K)= \lim_{t\rightarrow 0^{+}}
\frac{1}{(2t)^m\Delta_m}\frac{\omega_n}{\omega_{n-m}} \eval_G
\int_{\mathbb{R}^n} [x \in H_{G,t}]\mathds{1}_K(x)dx.
  \end{equation}
  For $x\in \mathbb{R}^n\backslash\{0\}$ and $t>0$, we
  define
  \begin{equation}
    \label{eqn:slab}
    H_{x,t}=\{y\in \mathbb{R}^n: \lvert \langle x,y\rangle
    \rvert\leq t\}.
    \end{equation}
    Note that
  \begin{equation*}
    [x\in H_{G,t}] = \prod_{j=1}^m[g_j\in
      H_{x,t}]=\prod_{j=1}^m\left[g_j\in
      H_{\tfrac{x}{\abs{x}},\tfrac{t}{\abs{x}}}\right].
  \end{equation*}
  By independence of $g_1,\ldots,g_m$,  for $x\not=0$ we have
  \begin{align*}
    \eval_G [x\in H_{G,t}] & = \gamma^m_n\left(H_{\frac{x}{\lvert x \rvert }, \frac{t}{\lvert x
        \rvert}}\right)\\
    & = \left(\frac{2}{\sqrt{2\pi}}\int_0^{t/\lvert x\rvert}
    e^{-r^2/2}dr\right)^m\\
    & = \left(\frac{2}{\sqrt{2\pi}}\frac{t}{\abs{x}}\int_{0}^1
      \exp\left(-\tfrac{t^2 u^2}{2\abs{x}^2}\right)du\right)^m.
  \end{align*}
  It follows that
  \begin{align*}
    \frac{1}{(2t)^m}\eval_G
    \int_{\mathbb{R}^n}[x\in H_{G,t}] \mathds{1}_K(x)dx 
    & =  \int_{\mathbb{R}^n}
    \left(\frac{1}{\sqrt{2\pi}}\frac{1}{\abs{x}}\int_{0}^1
      \exp\left(-\tfrac{t^2 u^2}{2\abs{x}^2}\right)du\right)^m \mathds{1}_K(x)dx.
  \end{align*}
Since $x\mapsto \tfrac{1}{\abs{x}}$ is locally integrable (for
$n\geq2$), we get by dominated convergence,
\begin{align*}
  \lim_{t\rightarrow 0^{+}}  \frac{1}{(2t)^m\Delta_m}\frac{\omega_n}{\omega_{n-m}}\eval_G
    \int_{\mathbb{R}^n}[x\in H_{G,t}] \mathds{1}_K(x)dx 
     = \frac{n-m}{m} \int_{K}\abs{x}^{-m}dx
     = \tildee{V}_{\ell}(K),
  \end{align*}where we used \eqref{eqn:dual_equiv} in the last step.

\end{proof}

\begin{rmk}
  The limiting relation \eqref{eqn:slabtoslice} can also be proved by
  applying the Gram--Schmidt algorithm to $g_1,\ldots,g_m$ and using
  the Lebesgue differentiation theorem \eqref{eq:L_diff_thm}. The
  proof we have given above aligns naturally with the Gaussian
  representation for $\tildee{V}_{\ell,t,N}(K,q)$ that we develop in
  the next section.
\end{rmk}

\section{Empirical functionals and symmetrization: main proofs}

In this section, we complete the proofs of Theorems \ref{thm:eqmi} and
\ref{thm:moments}. In order to make a clear distinction between the
tools for each theorem, we prove them separately (even though the
method for the former is subsumed in the latter).  As above, we use
$H_{x,t}$ for the slab defined in \eqref{eqn:slab}.

\begin{lemma}
  \label{lem:slab-dom}
  Let $X$ and $X^*$ be random vectors with densities $f$ and $f^*$,
  respectively.  Then for each $t>0$,
  \begin{equation*}
    \gamma_n (H_{X,t})\prec \gamma_n (H_{X^*, t}).
  \end{equation*}
\end{lemma}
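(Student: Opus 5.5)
The plan is to reduce the statement to a comparison of distributions of the Euclidean norms $\abs{X}$ and $\abs{X^*}$. By rotation invariance of $\gamma_n$ we have
\begin{equation*}
  \gamma_n(H_{x,t}) = \gamma_n\left(H_{\frac{x}{\abs{x}},\frac{t}{\abs{x}}}\right)
  = \frac{2}{\sqrt{2\pi}}\int_0^{t/\abs{x}} e^{-r^2/2}\,dr =: \phi_t(\abs{x}),
\end{equation*}
as already computed in the proof of Proposition \ref{lem:vm-gaussian}. The function $\phi_t:(0,\infty)\to(0,1)$ is continuous and strictly decreasing. Hence $\gamma_n(H_{X,t})=\phi_t(\abs{X})$ and $\gamma_n(H_{X^*,t})=\phi_t(\abs{X^*})$, both defined almost surely because $X$ and $X^*$ have densities. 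For $s\ge 0$, monotonicity of $\phi_t$ shows that the event $\{\phi_t(\abs{X})>s\}$ equals $\{\abs{X}<r_s\}$ for a threshold $r_s\in[0,\infty]$ that does not depend on the law of $X$. So it suffices to show
\begin{equation*}
  \mathbb{P}(\abs{X}< r)\le \mathbb{P}(\abs{X^*}<r)\qquad\text{for all } r>0,
\end{equation*}
that is, $\int_{rB_2^n} f \le \int_{rB_2^n} f^*$.

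This last inequality is the standard bathtub/Hardy--Littlewood fact: $\int_A f\le\int_{A^*} f^*$ for any set $A$ of finite measure. Here $A=rB_2^n$ and $A^*=A$. To prove it, I would write $f$ through the layer cake representation. Then
\begin{equation*}
  \int_{rB_2^n} f=\int_0^\infty \abs{\{f>\alpha\}\cap rB_2^n}\,d\alpha
  \le \int_0^\infty \min\bigl(\abs{\{f>\alpha\}},\abs{rB_2^n}\bigr)\,d\alpha .
\end{equation*}
Since $\{f>\alpha\}^*$ and $rB_2^n$ are centered balls, one contains the other. So the minimum equals $\abs{\{f>\alpha\}^*\cap rB_2^n}$. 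Integrating in $\alpha$ and using the definition of $f^*$ gives $\int_{rB_2^n} f^*$.

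The only delicate point is handling the threshold sets. Strict versus non-strict inequalities are harmless because $\{\abs{X}=r\}$ is null for both variables, given that both have densities. The cases $s\ge 1$ and $s=0$ are trivial: the probabilities are $0$ and $1$, respectively, for both variables.
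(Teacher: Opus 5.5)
Your proposal is correct and follows essentially the paper's route. Rotation invariance and the monotonicity of $r\mapsto\gamma_1([-t/r,t/r])$ reduce the claim to $\prob(\abs{X}<c)\le\prob(\abs{X^*}<c)$ for a threshold $c=c(s,t)$ that does not depend on the law, and this is the elementary rearrangement inequality $\int_{cB_2^n} f\le\int_{cB_2^n} f^*$. The only difference is that the paper cites this inequality (Lieb--Loss, Theorem 3.4), whereas you prove it directly from the layer cake representation and the fact that centered balls are nested, which is a valid self-contained verification.
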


\begin{proof} Let $s > 0$. For $z\in \mathbb{R}$, let
  $$\Phi(z)=\frac{1}{(2\pi)^{1/2}}\int_{-\infty}^{z}e^{-r^2/2}dr.$$
  By rotational invariance of the Gaussian measure,
  \begin{align*}
    \prob \big( \gamma_n (H_{X,t}) > s \big) & = \prob \left(
    \gamma_1 \left( \left[ - \tfrac{t}{\abs{X}}, \tfrac{t}{\abs{X}} \right]
    \right) > s \right)\\
    & = \prob\left(2 \Phi \left(\tfrac{t}{\abs{X}} \right) - 1>s\right)\\
    & = \prob\left(\abs{X}<c(s,t)\right),
  \end{align*}where $c(s,t)=t(\Phi^{-1}\big((1+s)/2\big))^{-1}.$
  We can conclude by applying the simplest of rearrangement
  inequalities (e.g., in \cite[Theorem 3.4]{LL}),
  \begin{align*}
    \prob (\abs{X} < c(s,t)) &= \int_{\real^n} \mathds{1}_{c(s,t)B_2^n} (x)
    f(x) \, dx 
    \leq \int_{\real^n} \mathds{1}_{(c(s,t) B_2^n)^*} (x) f^* (x) \, dx 
    = \prob (\abs{X^*}\leq c(s,t)).\qedhere
  \end{align*}
\end{proof}

We will use the following tensorization property of the stochastic
ordering \eqref{eq:stoch_dom}, which can be proved by a standard
conditioning argument; see \cite{SS12} for general theory on
stochastic orderings.

\begin{lemma}
  \label{lem:tensor}
  Let $\{\xi_i\}_{i=1}^N$ and $\{\eta_i\}_{i=1}^N$ be collections of
  independent non-negative random variables. Assume that
  $\xi_i\prec\eta_i$ for each $i=1,\ldots,N$. Then for all
  $t_1,\ldots,t_N\in(0,\infty)$,
  \begin{equation*}
    \sum_{i=1}^n  t_i\xi _i \prec
     \sum_{i=1}^n t_i\eta_i.
  \end{equation*}
\end{lemma}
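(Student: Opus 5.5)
The plan is to prove the tensorization lemma by induction on $N$, using a conditioning argument together with a coupling reformulation of stochastic order. First observe that the statement is invariant under positive scaling: if $\xi\prec\eta$ and $t>0$, then $\prob(t\xi>s)=\prob(\xi>s/t)\leq\prob(\eta>s/t)=\prob(t\eta>s)$, so $t\xi\prec t\eta$. Hence it suffices to treat $t_i=1$ for all $i$, with $\xi_i$ replaced by $t_i\xi_i$ and $\eta_i$ by $t_i\eta_i$; independence and non-negativity are preserved.

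The cleanest route is via the quantile coupling. Let $F_\xi(s)=\prob(\xi\leq s)$ and let $F_\xi^{-1}(u)=\inf\{s: F_\xi(s)\geq u\}$ be the generalized inverse. Take $U_1,\ldots,U_N$ independent uniform on $(0,1)$ and set $\xi_i'=F_{\xi_i}^{-1}(U_i)$ and $\eta_i'=F_{\eta_i}^{-1}(U_i)$. Then $\xi_i'$ has the law of $\xi_i$ and $\eta_i'$ has the law of $\eta_i$. Moreover, the families $\{\xi_i'\}$ and $\{\eta_i'\}$ are each independent, so $\sum\xi_i'$ has the law of $\sum\xi_i$, and likewise for $\eta$. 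The hypothesis $\xi_i\prec\eta_i$ says $F_{\xi_i}\geq F_{\eta_i}$ pointwise, which gives $F_{\xi_i}^{-1}\leq F_{\eta_i}^{-1}$ and hence $\xi_i'\leq\eta_i'$ almost surely. Summing, $\sum\xi_i'\leq\sum\eta_i'$ pointwise, so $\prob(\sum\xi_i'>s)\leq\prob(\sum\eta_i'>s)$ for every $s\geq0$, which is the claim.

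Alternatively, one can follow the paper's suggested conditioning argument by induction. Replace the $\xi_i$ one at a time by $\eta_i$, assuming all variables are mutually independent; this is allowed since only the laws of the two sums matter. The single-step inequality is
\[
\prob(\xi_1+Y>s)=\eval_Y\,\prob(\xi_1>s-Y\mid Y)\leq\eval_Y\,\prob(\eta_1>s-Y\mid Y)=\prob(\eta_1+Y>s),
\]
where $Y$ is the sum of the remaining variables and is independent of $\xi_1$ and $\eta_1$. When $s-Y<0$, both conditional probabilities equal $1$ because the variables are non-negative. Chaining the $N$ steps by transitivity of $\prec$ gives the result.

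The only point needing care is the generalized-inverse monotonicity, or equivalently the use of Fubini in the conditioning step. Neither is a real obstacle; the conditioning version avoids quantile technicalities entirely, so I would present that one.
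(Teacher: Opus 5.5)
Your proposal is correct. The induction-by-conditioning argument you plan to present is precisely the ``standard conditioning argument'' that the paper invokes without writing out: you realize all $2N$ variables independently on one space, swap $\xi_k$ for $\eta_k$ one at a time, and integrate the tail inequality against the law of the remaining sum, using non-negativity when $s-Y<0$. Your quantile-coupling alternative is also valid, and it yields the slightly stronger conclusion that the two sums can be coupled so that $\sum t_i\xi_i'\leq\sum t_i\eta_i'$ almost surely.
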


\begin{proof}[Proof of Theorem \ref{thm:eqmi}]
  Fix $m <n$ and let $K$ be a star body in $\real^n$.  Let
  $X_1,\ldots, X_N$ be independent random vectors uniformly
  distributed in $K$. Let $g_1,\ldots, g_m$ denote the rows of
  $G$. Then for each $t$ and $N$,
  \begin{align*}
    \tildee{V}_{\ell, t, N}(K) &= \frac{\abs{K}}{N}\eval_G \sum_{i=1}^N [X_i\in H_{G,t}] \\
    & = \frac{\abs{K}}{N}\eval_G \sum_{i=1}^N  \prod_{j=1}^m [g_j\in H_{X_i,t}] \\
    & = \frac{\abs{K}}{N}\sum_{i=1}^N \prod_{j=1}^m \gamma_n (H_{X_i,t})  \\ &=
    \sum_{i=1}^N \gamma_n^m (H_{X_i,t}).
  \end{align*}
  The same identity applies to indepdent random vectors $X_i^*$
  sampled uniformly in $K^*$.  Thus by Lemmas \ref{lem:slab-dom} and
  \ref{lem:tensor}, we have
  \begin{equation*}
    \tildee{V}_{m,t,N}(K)=\frac{\abs{K}}{N}\eval_G \sum_{i=1}^N
           [X_i\in H_{G,t}] \prec \frac{\abs{K}}{N}\eval_G
           \sum_{i=1}^N [X_i^*\in H_{G,t}]= \tildee{V}_{m,t,N}(K^*).\qedhere
  \end{equation*}
\end{proof}

The proof of Theorem \ref{thm:moments} relies on a stronger
symmetrization result from \cite{CEFPP15}, which makes use of Christ's
version \cite{Christ1984} of the Rogers/Brascamp--Lieb--Luttinger
inequality \cite{Rogers1957, BLL1974}; see the survey \cite{PP17} for
background and its use in stochastic geometry. We formulate it here in
the same notation as Lemma \ref{lem:slab-dom} (see \cite[Theorem 1.2]{CEFPP15}).

\begin{thm}
  \label{thm:CEFPP}
  Let $\{X_i\}_{i=1}^{N}$ and $\{X_i^{*}\}_{i=1}^N$ be collections of
  independent random vectors with densities $f$ and $f^*$,
  respectively.  Then for any $t>0$,
  \begin{equation*}
    \gamma_n\left((\mathop{\rm conv}\{\pm t^{-1}X_{i}\})^{\circ}\right)
    \prec \gamma_n\left((\mathop{\rm conv}\{\pm t^{-1}X_i^*\})^{\circ}\right).
  \end{equation*}
\end{thm}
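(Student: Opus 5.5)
The plan is to deduce Theorem \ref{thm:CEFPP} from the Rogers/Brascamp--Lieb--Luttinger inequality in Christ's form, via a Steiner symmetrization argument. First I fix $s\ge 0$ and rewrite the tail event as an integral over the random points:
\begin{equation*}
\prob\Big(\gamma_n\big((\mathop{\rm conv}\{\pm t^{-1}X_i\})^{\circ}\big)>s\Big)=\int_{(\real^n)^N}\mathds{1}_{\{F(x_1,\ldots,x_N)>s\}}\prod_{i=1}^N f(x_i)\,dx_1\cdots dx_N,
\end{equation*}
where $F(x_1,\ldots,x_N)=\gamma_n\big(\bigcap_{i}H_{x_i,t}\big)$. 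This uses that the polar of $\mathop{\rm conv}\{\pm t^{-1}x_i\}$ is exactly the intersection of the slabs $H_{x_i,t}$. The task is then to show that this integral does not decrease when $f$ is replaced by $f^*$, and it suffices to treat one Steiner symmetrization $S_u$ at a time.

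Fix $u\in S^{n-1}$ and write each $x_i=y_i+s_iu$ with $y_i\in u^\perp$. With the $y_i$ frozen, I view $A=\{(s_1,\ldots,s_N): F>s\}$ as a subset of $\real^N$. The key structural claim is that $A$ is origin-symmetric and convex. It is symmetric since $F$ is invariant under $x\mapsto -x$ applied to all points jointly, using $H_{-y+su,t}=-H_{y-su,t}$ and the symmetry of $\gamma_n$. For convexity, the set $\bigcap_i H_{x_i,t}$ corresponds to a polar of a body that is linear in the parameter $s$, so the map $s\mapsto\{z:\lvert\langle z,y_i\rangle+s_i\langle z,u\rangle\rvert\le t\ \forall i\}$ is jointly convex in the sense required for Borell/Pr\'ekopa--Leindler. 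Indeed, the graph $\{(s,z)\}$ is convex in $\real^N\times\real^n$, so $s\mapsto\gamma_n(\cdot)$ is log-concave and therefore has convex superlevel sets. This is the shadow-system argument used in \cite{CEFPP15}.

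With this in hand, Christ's version of the Rogers/BLL inequality applies to
\begin{equation*}
\int\mathds{1}_A(s_1,\ldots,s_N)\prod_i f(y_i+s_iu)\,ds,
\end{equation*}
which is an integral of a symmetric convex set against a product of one-dimensional functions. The inequality shows that this integral increases when each $f(y_i+\cdot\,u)$ is replaced by its symmetric decreasing rearrangement, that is, by $S_uf$. Integrating over the $y_i$ gives the tail inequality for $S_uf$. Iterating along a sequence of directions $u_k$ with $S_{u_k}\cdots S_{u_1}f\to f^*$ in $L^1$, and using continuity of the tail integral in $L^1$ of the product densities, yields the result for $f^*$.

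The main obstacle I expect is the convexity of $A$: one must check carefully that $\gamma_n$ of the polar of a parallel shadow system is log-concave in the parameters. The key fact is that the sets $\{(s,z)\}$ are convex, so Pr\'ekopa's theorem gives log-concavity of the marginal. A secondary technical issue is the limiting passage for general densities, which I would handle by first treating bounded compactly supported $f$ and then approximating.
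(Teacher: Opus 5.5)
Your overall architecture is the route of \cite{CEFPP15}, which the paper cites rather than reproves: write the tail probability as $\int \mathds{1}_{\{F>\alpha\}}\prod_i f(x_i)\,dx$, apply Christ's form of the Rogers/Brascamp--Lieb--Luttinger inequality along parallel lines to pass from $f$ to $S_uf$, then iterate Steiner symmetrizations and pass to the $L^1$ limit. The gap is in the one step that is not general machinery, the convexity of $A$.

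The set $\{(s,z)\in\mathbb{R}^N\times\mathbb{R}^n:\lvert\langle z,y_i\rangle+s_i\langle z,u\rangle\rvert\le t\ \forall i\}$ is not convex, because the constraint is bilinear in $s_i$ and $\langle z,u\rangle$. Take $n=2$, $N=1$, $u=e_2$, $y_1=e_1$. The points $(0,(0,M))$ and $(2,(0,0))$ lie in the set, but their midpoint $(1,(0,M/2))$ does not once $M>2t$.

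The conclusion you want from Pr\'ekopa is also false. Write $F(s)$ for $F(y_1+s_1u,\ldots,y_N+s_Nu)$. For $N=1$, $F(s)=2\Phi\big(t/\sqrt{\lvert y_1\rvert^2+s^2}\big)-1\sim\sqrt{2/\pi}\,t/\lvert s\rvert$, with $\Phi$ the standard normal distribution function. A log-concave function tending to $0$ must decay exponentially, so $F$ is not log-concave. The shadow-system input of Campi--Gronchi and Meyer--Reisner is not convexity of such a graph; it is convexity of the reciprocal of the volume of the polar, and it needs a genuine argument.

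Separately, your evenness argument via $x\mapsto -x$ also flips the $y_i$. What you need is invariance under the reflection $R_u$ in $u^\perp$, which holds because $H_{R_ux,t}=R_uH_{x,t}$ and $\gamma_n$ is $R_u$-invariant.

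What is true, and suffices for Christ's inequality, is that $s\mapsto 1/F(s)$ is even and convex. One way to see it is to write $z=w+ru$ with $w\in u^\perp$ and $r\in\mathbb{R}$. For fixed $r$ the constraints $\lvert\langle w,y_i\rangle+rs_i\rvert\le t$ are linear in $(rs,w)$, and $\gamma_n=\gamma_{n-1}\otimes\gamma_1$, so
\begin{equation*}
F(s)=\int_{\mathbb{R}}\varphi(r)\,G(rs)\,dr,\qquad G(\sigma)=\gamma_{n-1}\big(\{w\in u^\perp:\lvert\langle w,y_i\rangle+\sigma_i\rvert\le t\ \forall i\}\big),
\end{equation*}
where $\varphi$ is the standard Gaussian density on $\mathbb{R}$. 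Here the set of pairs $(\sigma,w)$ is convex, so $G$ is even and log-concave by Pr\'ekopa. Hence $F(s)=2\int_0^\infty\Gamma(r(s,1))\,dr$, where $\Gamma(\sigma,\tau)=G(\sigma)\varphi(\tau)$ is even and log-concave on $\mathbb{R}^{N+1}$.

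Ball's theorem with $p=1$ says that $x\mapsto\big(\int_0^\infty\Gamma(rx)\,dr\big)^{-1}$ is convex. Restricting to $x=(s,1)$ shows that $1/F$ is convex and even, so the superlevel sets $A$ are symmetric convex. With this lemma in place of your log-concavity claim, the rest of your proof goes through.
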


\begin{rmk}
  For a single random vector $X$, we have
  \begin{equation*}
    \mathop{\rm conv}\{\pm t^{-1}X_1\}^{\circ} = [-t^{-1}X,t^{-1}X]^{\circ}=H_{X,t},
  \end{equation*}hence  Theorem \ref{thm:CEFPP} covers Lemma \ref{lem:slab-dom}.
\end{rmk}

\begin{proof}[Proof of Theorem \ref{thm:moments}]
  Fix $t>0$ and $q\in \mathbb{N}$. We use multinomial expansion to write
\begin{align*}
  \tildee{V}_{\ell,t,N}(K,q)^q & = \eval_G
  \left[\frac{1}{N}\sum_{i=1}^N[X_i\in
      H_{G,t}]\right]^q \\ & = \frac{1}{N^q}\eval_G \left[ 
     \sum_{I} \prod_{k=1}^q [X_{i_k}\in
      H_{G,t}] \right],
\end{align*}
where the last sum ranges over all $I=(i_1, \ldots,
i_q)\in\{1,\ldots,N\}^q$.  For such fixed $I$, we write
$X(I)=(X_{i_1},\ldots,X_{i_q})$ and define
\begin{align*}
  H_{X(I),t}&=\{y\in \mathbb{R}^n:|\langle X_{i_k},y \rangle|\leq
  t, \; k=1,\ldots,q\}
\end{align*}
so that
\begin{equation*}
  \prod_{k=1}^q[X_{i_k}\in H_{G,t} ]=\prod_{j=1}^m[g_j\in H_{X(I),t}].
\end{equation*}
Next, we note that
\begin{equation*}
  H_{X(I),t}=(\mathop{\rm conv}\{\pm t^{-1}X_{i_1},\ldots,\pm t^{-1}X_{i_q}\})^{\circ}.
\end{equation*}
Theorem \ref{thm:CEFPP} implies that for each $I$, 
\begin{equation*}
\gamma_n^m \big( ( \mathop{\rm conv}\{\pm
t^{-1}X_{i_k}\}_{k=1}^q)^\circ \big)
\prec \gamma_n^m \big( ( \mathop{\rm conv}\{\pm
  t^{-1}X_{i_k}^{\ast}\}_{k=1}^q)^\circ \big),
\end{equation*}which carries over to expectations.
Denoting the expectations in $\{X_i\}$ and $\{X_i^*\}$ as $\eval_{\bf
  X}$ and $\eval_{\bf X^*}$, respectively, we conclude with
\begin{align*}
\eval_{\bf X} \tildee{V}_{\ell,t,N}(K,q)^q & = \frac{\abs{K}^q}{N^q} \sum_I \eval_{{\bf
    X}}\gamma_n^m \big( \mathop{\rm conv}\{\pm
t^{-1}X_{i_k}\}_{k=1}^q)^\circ \big)\\ & \leq \frac{\abs{K}^q}{N^q} \sum_I
\eval_{{\bf X^*}}\gamma_n^m \big( ( \mathop{\rm conv}\{\pm
t^{-1}X_{i_k}^*\}_{k=1}^q)^\circ \big)\\ & = \eval_{\bf X^*}
\tildee{V}_{\ell,t,N}(K^*,q)^q.\qedhere
\end{align*}

\end{proof}

\section{The Busemann intersection inequality via Gaussian slabs}

In this section, we provide the proof of Theorem \ref{thm:ebie}. The
main step is the following proposition.

\begin{prop}
  \label{prop:mom-limit}
  Let $K$ be a star body in $\real^n$. Let $g$ be a standard Gaussian
  random vector in $\real^n$ and let
  $c_n=n\omega_n/(2\pi)^{n/2}$. Then
  \begin{equation}
    \label{eq:slabtoslice}
    \lim_{t \to 0^{+}}
    \eval_g \left[ \frac{1}{c_n\ln(1/t)}
      \left( \frac{|K \cap H_{g,t}|}{2t} \right)^n \right]
    = \eval_g |K \cap g^\perp|^n.
  \end{equation}
\end{prop}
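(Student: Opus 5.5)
The plan is to pass to polar coordinates in $g$, which reduces \eqref{eq:slabtoslice} to a one-dimensional integral in a scale variable $s$. In that integral the factor $\ln(1/t)$ appears as $\int_t^1 ds/s$.

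\emph{Reduction.} Write $g=r\theta$ with $r=\abs{g}$ and $\theta$ uniform on $S^{n-1}$, independent of $r$. The density of $r$ is $c_n r^{n-1}e^{-r^2/2}$, with $c_n=n\omega_n/(2\pi)^{n/2}$. Then $H_{g,t}=\{x:\abs{\langle x,\theta\rangle}\le t/r\}$. For $\theta\in S^{n-1}$ and $s>0$, set
\begin{equation*}
A_\theta(s)=\frac{\abs{K\cap\{x:\abs{\langle x,\theta\rangle}\le s\}}}{2s},
\end{equation*}
so that $\abs{K\cap H_{g,t}}/(2t)=r^{-1}A_\theta(t/r)$. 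Also define $F(s)=\int_{S^{n-1}}A_\theta(s)^n\,d\theta$. Fubini and the substitution $s=t/r$ (so that $dr/r=ds/s$) give
\begin{equation*}
\eval_g\left(\frac{\abs{K\cap H_{g,t}}}{2t}\right)^n
= c_n\int_0^\infty r^{-1}e^{-r^2/2}F(t/r)\,dr
= c_n\int_0^\infty s^{-1}e^{-t^2/(2s^2)}F(s)\,ds .
\end{equation*}
On the right-hand side of \eqref{eq:slabtoslice}, sections are invariant under scaling of $g$, so $\eval_g\abs{K\cap g^\perp}^n=\int_{S^{n-1}}\abs{K\cap\theta^\perp}^n d\theta$. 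It therefore suffices to show
\begin{equation*}
\int_0^\infty s^{-1}e^{-t^2/(2s^2)}F(s)\,ds=\ln(1/t)\,F(0^+)+o(\ln(1/t)),
\end{equation*}
where $F(0^+)=\int_{S^{n-1}}\abs{K\cap\theta^\perp}^n d\theta$.

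\emph{Two inputs.} The first is a uniform bound. If $K\subseteq RB_2^n$, every hyperplane section has volume at most $\omega_{n-1}R^{n-1}$. Hence $A_\theta(s)\le\omega_{n-1}R^{n-1}$, and trivially $A_\theta(s)\le\abs{K}/(2s)$. So $F$ is bounded and $F(s)=O(s^{-n})$ as $s\to\infty$.

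The second is the pointwise limit $A_\theta(s)\to\abs{K\cap\theta^\perp}$ as $s\to0^+$. This is the Lebesgue differentiation statement \eqref{eq:L_diff_thm} for the marginal $\pi_{\mathbb{R}\theta}(\mathds 1_K)$. I will actually show the stronger fact that $a\mapsto\abs{K\cap(\theta^\perp+a\theta)}$ is continuous at $a=0$. Points of $\theta^\perp$ outside $\rho_K$ along that hyperplane have a neighborhood disjoint from $K$ by compactness. Continuity of $\rho_K$ gives convergence of the indicators a.e. on $\theta^\perp$, and dominated convergence then applies since everything lies in $RB_2^n$. Bounded convergence in $\theta$ then yields $F(s)\to F(0^+)$. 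I expect this continuity to be the only delicate point; it is where the star body hypothesis enters.

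\emph{Splitting the integral.} Fix $\varepsilon\in(0,1)$ and $\delta\in(0,1)$, and split $(0,\infty)$ into $(0,t^{1-\varepsilon}]$, $[t^{1-\varepsilon},\delta]$, $[\delta,1]$ and $[1,\infty)$.
\begin{itemize}
\item On $(0,t^{1-\varepsilon}]$, bound $F$ by its supremum. After the substitution $s=tu$ the integral is at most $\sup F\cdot\bigl(\int_0^1 u^{-1}e^{-1/(2u^2)}du+\varepsilon\ln(1/t)\bigr)$.
\item On $[\delta,1]$ the contribution is $O(\ln(1/\delta))$.
\item On $[1,\infty)$ it is $O(1)$, by the decay $F(s)=O(s^{-n})$.
\item On $[t^{1-\varepsilon},\delta]$ we have $e^{-t^2/(2s^2)}\in[e^{-t^{2\varepsilon}/2},1]$, which tends to $1$. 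Also $\abs{F(s)-F(0^+)}\le\eta(\delta)$ with $\eta(\delta)\to0$, and $\int_{t^{1-\varepsilon}}^{\delta}ds/s=(1-\varepsilon)\ln(1/t)-\ln(1/\delta)$.
\end{itemize}

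Divide by $\ln(1/t)$ and let $t\to0^+$. The difference between the normalized integral and $F(0^+)$ is then $O(\varepsilon+\eta(\delta))$ in $\limsup$. Sending $\delta\to0$ and then $\varepsilon\to0$ gives the claimed limit, and multiplying through by $c_n$ yields \eqref{eq:slabtoslice}.
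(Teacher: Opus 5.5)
Your proof is correct, and it takes a genuinely different route from the paper's.

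\textbf{How the paper argues.} The paper keeps $\theta$ fixed and writes $\abs{K\cap H_{g,t}}^n$ as an $n$-dimensional integral. The integrand is the product of marginals $F_{K,\theta}(s)=\prod_i\pi_{[\theta]}(\mathds{1}_K)(s_i\theta)$, and the kernel is $\psi_t(s)=\eval_{\abs{g}}\mathds{1}_{[-t/\abs{g},t/\abs{g}]^n}(s)\mathds{1}_{\{\abs{g}>t\}}$. It then normalizes by $(2t)^n\gamma_t$, where $\gamma_t=\eval\abs{g}^{-n}\mathds{1}_{\{\abs{g}>t\}}\sim c_n\ln(1/t)$. It checks the approximate-identity axioms using $\gamma_t/\gamma_{t/\delta}\to1$ and applies a general approximate-identity theorem for each $\theta$. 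Only at the end does it integrate in $\theta$ by dominated convergence. The event $\{\abs{g}\le t\}$ is handled by a separate lemma.

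\textbf{How you argue.} You average over $\theta$ first. You then use the scale invariance $dr/r=ds/s$ to collapse everything to the single integral $\int_0^\infty s^{-1}e^{-t^2/(2s^2)}F(s)\,ds$. In that form the logarithmic divergence is visibly the log-length of $[t,1]$.

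\textbf{How the two correspond.} Your regions line up with the paper's steps. The range $s\ge1$ is the paper's $\{\abs{g}\le t\}$ lemma. The range $[\delta,1]$ is $t<\abs{g}<t/\delta$, which is the tail condition of the approximate identity. The remaining range carries the mass of order $\ln(1/t)$.

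\textbf{What each approach buys.} Yours is self-contained and elementary: no $n$-dimensional kernel, no external approximate-identity theorem, and no separate asymptotics for $\gamma_t$. It needs only the one-sided limit $F(s)\to F(0^+)$ of symmetric slab averages. You also prove continuity of $a\mapsto\abs{K\cap(\theta^\perp+a\theta)}$ at $0$ from the star-body hypothesis. The paper uses this fact without proof when it applies the approximate-identity theorem to $F_{K,\theta}$. The paper's formulation, for its part, fits the general approximate-identity framework mentioned in its concluding remarks. That framework is the natural starting point for extending the result to densities other than $\frac{1}{\abs{K}}\mathds{1}_K$.

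\textbf{Two small remarks.} First, your a.e.\ convergence of indicators needs one more line. It rests on the fact that $\partial K\cap\theta^\perp\subseteq\{\rho_K(u)u:u\in S^{n-1}\cap\theta^\perp\}$ is a null set in $\theta^\perp$. Second, the $\varepsilon$-split is unnecessary. Since $\int_0^{\delta/t}u^{-1}e^{-1/(2u^2)}\,du=\ln(\delta/t)+O(1)$, you can treat all of $(0,\delta]$ in one step.
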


We first note that \eqref{eq:L_diff_thm} implies that for fixed $g$,
\begin{equation*}
  \frac{\lvert K\cap H_{g,t}\rvert}{t}
  \rightarrow \lvert g\rvert^{-1}\lvert K\cap g^{\perp}\rvert;
\end{equation*}
moreover, $|g|$ and $g^{\perp}$ are independent. However, Proposition
\ref{prop:mom-limit} involves $n$-th powers and $\lvert g\rvert^{-n}$
is not integrable. We will show that a careful regularization of the
divergent factor $\eval_g \lvert g \rvert^{-n}$ leads to the $\ln (1/t)$
term in the limit \eqref{eq:slabtoslice}.

We will first prove several auxilliary lemmas. For each $t>0$, we
define
\begin{equation*}
  \gamma_{t} =\eval_g \lvert g \rvert^{-n}\mathds{1}_{\{\abs{g}>t\}}.
\end{equation*}
We start with the limiting behavior of $\gamma_t$.
\begin{lemma}
  \label{lem:lim}
  Let $n\in \mathbb{N}$ and set $c_n=n\omega_n/(2\pi)^{n/2}$. Then
  \begin{equation*}
    \lim_{t\rightarrow 0^{+}}\frac{\gamma_t}{c_n\ln(1/t)}=1,
  \end{equation*}and for any $\delta\in (0,1]$,
    \begin{equation*}
      \lim_{t\rightarrow 0^{+}} \frac{\gamma_{t}}{\gamma_{t/\delta}} =1.
    \end{equation*}
    
\end{lemma}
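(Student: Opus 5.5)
The plan is to compute $\gamma_t$ explicitly in polar coordinates, exactly as in the computation of $\eval(\chi_k^2)^{-1/2}$ above. Since $g$ has density $(2\pi)^{-n/2}e^{-\abs{x}^2/2}$ and the surface area of $S^{n-1}$ is $n\omega_n$, we get
\begin{equation*}
  \gamma_t=\frac{n\omega_n}{(2\pi)^{n/2}}\int_t^\infty r^{-n}e^{-r^2/2}r^{n-1}\,dr
  = c_n\int_t^\infty \frac{e^{-r^2/2}}{r}\,dr .
\end{equation*}
The factor $c_n$ thus comes out immediately. Both limits in the lemma then reduce to one-dimensional statements about the function $F(t)=\int_t^\infty r^{-1}e^{-r^2/2}\,dr$.

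For the first limit, split the integral at $r=1$:
\begin{equation*}
  F(t)=\int_t^1\frac{dr}{r}-\int_t^1\frac{1-e^{-r^2/2}}{r}\,dr+\int_1^\infty\frac{e^{-r^2/2}}{r}\,dr.
\end{equation*}
The first term equals $\ln(1/t)$. The second term is bounded in absolute value by $\int_0^1 r/2\,dr=1/4$, using $1-e^{-u}\le u$. The third term is a finite constant. Hence $F(t)=\ln(1/t)+O(1)$ as $t\to0^+$, and dividing by $\ln(1/t)\to\infty$ gives $\gamma_t/(c_n\ln(1/t))\to1$.

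For the second limit, note that for $\delta\in(0,1]$ we have $t/\delta\ge t$, so
\begin{equation*}
  \gamma_t-\gamma_{t/\delta}=c_n\int_t^{t/\delta}\frac{e^{-r^2/2}}{r}\,dr\le c_n\ln(1/\delta).
\end{equation*}
This difference is bounded uniformly in $t$. Since $\gamma_{t/\delta}\to\infty$ by the first part (applied at $t/\delta\to0$), we obtain
\begin{equation*}
  \frac{\gamma_t}{\gamma_{t/\delta}}=1+\frac{\gamma_t-\gamma_{t/\delta}}{\gamma_{t/\delta}}\to1.
\end{equation*}

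There is no real obstacle here. The only point needing care is the bounded remainder in the logarithmic asymptotics, and the elementary inequality $1-e^{-u}\le u$ handles it.
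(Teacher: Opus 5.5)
Your proof is correct and follows essentially the paper's route. Polar coordinates reduce $\gamma_t$ to $c_n\int_t^\infty r^{-1}e^{-r^2/2}\,dr$, and the second limit uses the same bound $\int_t^{t/\delta}r^{-1}\,dr=\ln(1/\delta)$ together with $\gamma_{t/\delta}\to\infty$. The only difference is cosmetic: you obtain $\ln(1/t)+O(1)$ by splitting at $r=1$ and using $1-e^{-u}\le u$, whereas the paper integrates by parts to get $\ln(1/t)\,e^{-t^2/2}+\int_t^\infty r\ln r\,e^{-r^2/2}\,dr$.
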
  

\begin{proof}Using polar coordinates, for any $\delta\in (0,1]$, 
   \begin{equation*}
     \gamma_{t/\delta}=\eval_g |g|^{-n} \mathds{1}_{\{\abs{g}>t/\delta\}} = \frac{n
       \omega_n}{(2\pi)^{n/2}} \int_{t/\delta}^\infty r^{-1} e^{-r^2/2} \, dr
\end{equation*}and 
\begin{equation*}
\int_{t/\delta}^\infty r^{-1} e^{-r^2/2} \, dr
= (-\ln(t/\delta))e^{-t^2/(2\delta^2)}
	+\int_{t/\delta}^\infty (r\ln r)(e^{-r^2/2}) \, dr.
\end{equation*}Thus for $\delta =1$, we have
\begin{equation*}
  \frac{\gamma_t}{c_n \ln(1/t)}=e^{-t^2/2}+\frac{1}{
    \ln(1/t)}\int_t^\infty (r\ln r)(e^{-r^2/2}) \, dr,
\end{equation*}which tends to $1$ as $t\rightarrow 0^{+}$
since $r\mapsto r \ln r$ is integrable on $(0,1)$.

To prove the second equality, write
\begin{align*}
  \frac{\gamma_{t}}{\gamma_{t/\delta}} =
  1 + \frac{c_n}{\gamma_{t/\delta}} \int_{t}^{t/\delta}
  r^{-1} e^{-r^2/2} \, dr
\end{align*}
and observe that
\begin{align*}
  \frac{1}{\gamma_{t/\delta}} \int_{t}^{t/\delta} r^{-1} e^{-r^2/2} \, dr
  \leq \frac{1}{\gamma_{t/\delta}}\int_{t}^{t/\delta}
  r^{-1} \, dr
  &\leq \frac{\ln(1/\delta)}{\gamma_{t/\delta}},
\end{align*}
which tends to $0$ as $t\rightarrow 0^{+}$.
\end{proof}

We will condition on $\abs{g}$ according to
\begin{equation}
  \eval_g \abs{ K\cap H_{g,t}}^n =
  \eval_g \abs{ K\cap H_{g,t}}^n\mathds{1}_{\{\abs{g} \leq t\}} +
    \eval_g \abs{K\cap H_{g,t}}^n \mathds{1}_{\{\abs{g} > t\}},
\end{equation}and treat each summand separately.

\begin{lemma}
  \label{lem:g<t}
  For any star body $K$ in $\mathbb{R}^n$,
  \begin{equation}
    \lim_{t\rightarrow 0^{+}}\frac{\eval_g \abs{K \cap H_{g,t}}^n
      \mathds{1}_{\{\abs{g}\leq t\}} }{ (2t)^n \, \gamma_{t}}=0.
  \end{equation}
\end{lemma}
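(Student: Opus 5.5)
We bound the numerator crudely and use that the denominator has the extra factor $\gamma_t$, which diverges by Lemma \ref{lem:lim}. Since $K$ is a star body, it is compact. Fix $R>0$ with $K\subseteq RB_2^n$, so that trivially $\abs{K\cap H_{g,t}}\leq \abs{K}$ for every $g$.

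When $\abs{g}\leq t$, the slab $H_{g,t}=\{x:\abs{\langle x,g\rangle}\leq t\}$ is not thin: its width is $2t/\abs{g}\geq 2$. So the bound $\abs{K\cap H_{g,t}}^n\leq \abs{K}^n$ is the natural one here. This gives
\begin{equation*}
  \eval_g \abs{K\cap H_{g,t}}^n\mathds{1}_{\{\abs{g}\leq t\}}\leq \abs{K}^n\,\gamma_n(tB_2^n).
\end{equation*}
Bounding the Gaussian density by $(2\pi)^{-n/2}$, we get $\gamma_n(tB_2^n)\leq (2\pi)^{-n/2}\omega_n t^n$. Hence
\begin{equation*}
  \frac{\eval_g \abs{K\cap H_{g,t}}^n\mathds{1}_{\{\abs{g}\leq t\}}}{(2t)^n\gamma_t}\leq \frac{\abs{K}^n\omega_n}{(2\pi)^{n/2}2^n}\cdot\frac{1}{\gamma_t}.
\end{equation*}

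By Lemma \ref{lem:lim}, $\gamma_t\sim c_n\ln(1/t)\to\infty$ as $t\to0^+$. The right-hand side therefore tends to $0$, which proves the lemma.

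There is no real obstacle. The only point to check is that the $t^n$ from the Gaussian measure of the small ball exactly cancels the $(2t)^n$ normalization. The decay then comes entirely from the logarithmic divergence of $\gamma_t$, which Lemma \ref{lem:lim} already provides. No finer estimate of the slice volumes, and no use of the star-body structure beyond finiteness of $\abs{K}$, is needed.
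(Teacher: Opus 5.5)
Your proof is correct and is essentially the paper's argument. You bound the slab volume by $\abs{K}$ and bound $\prob(\abs{g}\leq t)$ by a constant times $t^n$; you do this via the density bound where the paper uses polar coordinates, but both give the same constant $c_n/n=\omega_n/(2\pi)^{n/2}$. You then conclude from $\gamma_t\to\infty$ (Lemma \ref{lem:lim}), exactly as the paper does.
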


\begin{proof}Using polar coordinates with $c_n=n\omega_n/(2\pi)^{n/2}$,
  \begin{align*}
\eval_g \abs{K \cap H_{g,t}}^n
  \mathds{1}_{\{\abs{g}\leq t\}} \leq \abs{K}^n \prob \big( \abs{g} \leq
    t\big)
    = c_n \abs{K}^n  \int_0^{t}
    r^{n-1} e^{-r^2/2}\, dr
     \leq  \frac{c_n \abs{K}^n}{n} \cdot t^n.
  \end{align*}
Thus
  \begin{align*}
     \frac{\eval_g  \abs{K \cap H_{g,t}}^n
      \chi_{\{\abs{g}\leq t\}} }{ (2t)^n \, \gamma_{t}}
 \leq \frac{c_n\abs{K}^n}{n} \cdot \frac{t^n}{(2t)^n\gamma_{t}}
  \leq \frac{c_n\abs{K}^n}{2^nn} \cdot
    \frac{1}{\gamma_{t}},
  \end{align*}which tends to $0$ as $t\rightarrow 0^{+}$.
  \end{proof}

The slab volume $\abs{K\cap H_{g,t}}$ can be computed by integrating
the marginal of $\mathds{1}_K$ on the subspace $\mathop{\rm
  span}\{g\}$ (cf. \eqref{eqn:marginal}). For brevity, we write
$\theta=g/|g|$ and set $[\theta]=\mathop{\rm span}\{\theta\}$. To
treat the $n$-th power, we introduce the function
$F_{K,\theta}:\mathbb{R}^n\rightarrow [0,\infty)$ defined as
  \begin{equation}
    \label{eqn:FK}
    F_{K,\theta} (s) = \pi_{[\theta]}(\mathds{1}_K)(s_1 \theta)\cdots
    \pi_{[\theta]}(\mathds{1}_K)(s_n \theta).
  \end{equation}
  With this notation, 
  \begin{equation}
    \label{eqn:FK0}
  F_{K,\theta}(0)=\abs{K\cap\theta^{\perp}}^n.
  \end{equation}

  \begin{lemma}
    \label{lem:g>t}
    Let $K$ be a star body in $\mathbb{R}^n$.  Let $g$ be a Gaussian
    vector in $\mathbb{R}^n$ and set $\theta = g/\lvert g \rvert$. Let
    $F_{K,\theta}$ be as defined in \eqref{eqn:FK}.  Define $\psi:\mathbb{R}^n\rightarrow [0,\infty)$
      by
    \begin{align}
      \label{eqn:psi_t}
    \psi_t (s) := \eval_{\abs{g}} \mathds{1}_{\left[-\frac{t}{|g|}, \frac{t}{|g|}\right]^n} (s)
    \, 	\mathds{1}_{\{\abs{g} > t\}}.
  \end{align}
  Then
  \begin{equation*}
    \eval_g \abs{K \cap H_{g,t}}^n \, \mathds{1}_{\{\abs{g} > t\}} 
    =\eval_\theta \big( F_{K,\theta}*\psi_t \big) (0).
  \end{equation*}

\end{lemma}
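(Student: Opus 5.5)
The plan is to condition on the direction $\theta=g/\abs{g}$, which is independent of $\abs{g}$, and to rewrite the slab volume as an integral of the marginal $\pi_{[\theta]}(\mathds{1}_K)$. For fixed $g\neq 0$, the slab is
\begin{equation*}
H_{g,t}=\{x:\abs{\langle x,\theta\rangle}\le t/\abs{g}\}.
\end{equation*}
Fubini on the decomposition $\mathbb{R}^n=[\theta]\oplus\theta^\perp$ then gives
\begin{equation*}
\abs{K\cap H_{g,t}}=\int_{-t/\abs{g}}^{t/\abs{g}}\pi_{[\theta]}(\mathds{1}_K)(r\theta)\,dr .
\end{equation*}
Raising this to the $n$-th power and writing the product of $n$ one-dimensional integrals as a single integral over the cube $[-t/\abs{g},t/\abs{g}]^n$ yields
\begin{equation*}
\abs{K\cap H_{g,t}}^n=\int_{\mathbb{R}^n}F_{K,\theta}(s)\,\mathds{1}_{\left[-\frac{t}{\abs{g}},\frac{t}{\abs{g}}\right]^n}(s)\,ds ,
\end{equation*}
using the definition \eqref{eqn:FK}.

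Next I multiply by $\mathds{1}_{\{\abs{g}>t\}}$ and take $\eval_g=\eval_\theta\eval_{\abs{g}}$. This factorization is justified by the independence of $\theta$ and $\abs{g}$ recalled in the previous section. Everything is nonnegative and measurable, so Tonelli lets me move $\eval_{\abs{g}}$ inside the $ds$-integral:
\begin{equation*}
\eval_g\abs{K\cap H_{g,t}}^n\mathds{1}_{\{\abs{g}>t\}}=\eval_\theta\int_{\mathbb{R}^n}F_{K,\theta}(s)\,\psi_t(s)\,ds .
\end{equation*}
Here $\psi_t$ is exactly the function in \eqref{eqn:psi_t}. The cube is symmetric, so $\psi_t(-s)=\psi_t(s)$, and hence $\int F_{K,\theta}(s)\psi_t(s)\,ds=\int F_{K,\theta}(s)\psi_t(0-s)\,ds=(F_{K,\theta}*\psi_t)(0)$. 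This is the claimed identity.

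The only delicate points are measure-theoretic. First, the marginal $\pi_{[\theta]}(\mathds{1}_K)$ is defined only almost everywhere on $[\theta]$. This causes no trouble, because it is integrated against Lebesgue measure in each coordinate, so $F_{K,\theta}$ is well defined a.e.\ and bounded by $\mathrm{diam}(K)^{n(n-1)}$ (or, more crudely, is finite and integrable on bounded sets). Second, the joint measurability in $(\theta,s)$ needed for Tonelli follows from the measurability of $(\theta,r)\mapsto\pi_{[\theta]}(\mathds{1}_K)(r\theta)$, which in turn comes from Fubini applied to $\mathds{1}_K$ in rotated coordinates.

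The main thing to check carefully is the convolution notation, namely that evaluating at $0$ involves reflecting $\psi_t$, which is harmless by its symmetry. I also need to confirm that $\psi_t$ is finite: on $\{\abs{g}>t\}$ the cube has side length less than $2$, and $\psi_t\le 1$ pointwise, so no integrability issue arises at this stage. The divergence of $\eval_g\abs{g}^{-n}$ only enters later, in the limit analysis.
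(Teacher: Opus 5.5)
Your proposal is correct and follows essentially the same route as the paper. You use Fubini along $[\theta]\oplus\theta^{\perp}$ to write the slab volume through the marginal, rewrite the $n$-th power as an integral of $F_{K,\theta}$ over the cube $[-t/\abs{g},t/\abs{g}]^n$, and then use the independence of $\theta$ and $\abs{g}$ with Tonelli to produce $\psi_t$. Your remarks on the symmetry of $\psi_t$, which lets the integral be read as a convolution at $0$, and on measurability make explicit points the paper leaves implicit.
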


\begin{proof}Assume first that $g$ is fixed. Then
\begin{align*}
  \abs{K \cap H_{g,t}}^n &=\prod_{i=1}^n\int_{\real^n}
  \mathds{1}_K (x_i) \mathds{1}_{H_{g,t}} (x_i) \, dx_{i}.
\end{align*}
For each $i$, we decompose $x_i\in \mathbb{R}^n$ as $x_i=y_i+z_i$ with
$y_i\in [\theta]$, $z_i\in \theta^{\perp}$ and use $\langle y_i+z_i,
g\rangle = \langle y_i, g \rangle$ along with Fubini's theorem to get
\begin{align*}
  \int_{\mathbb{R}^n} \mathds{1}_K(x_i)\mathds{1}_{H_{g,t}}(x_i) dx_i & =
  \int_{[\theta]} \int_{\theta^{\perp}} \mathds{1}_{K}(y_i+z_i)\mathds{1}_{H_{g,t}}(y_i+z_i) dy_i dz_i\\
  & = \int_{[\theta]} \pi_{[\theta]}(\mathds{1}_K)(y_i) \mathds{1}_{H_{g,t}}(y_i)dy_i\\
  & = \int_{\mathbb{R}} \pi_{[\theta]}(\mathds{1}_K)(s_i\theta)
  \mathds{1}_{\left[\frac{-t}{\lvert g \rvert},\frac{t}{\lvert g\rvert}\right]}(s_i)ds_i,
\end{align*}where $y_i=s_i \theta$ and we have used $\langle y_i, g \rangle =
s_i\lvert g\rvert$.  Using Fubini's theorem again, and writing
$s=(s_1,\ldots,s_n)$ and $ds=ds_1\cdots ds_n$, we have
\begin{align*}
  \abs{K \cap H_{g,t}}^n & =
   \int_{\mathbb{R}^n} F_{K,\theta}(s)
   \mathds{1}_{\left[-\frac{t}{\lvert g \rvert},\frac{t}{\lvert g\rvert} \right]^n}(s)ds.
\end{align*}
Note that $F_{K,\theta}$ depends only on $\theta$ and not on $|g|$.
As $\theta$ and $\lvert g \rvert$ are independent,
we can decompose $\eval_{g}$ as $\eval_{\theta}\eval_{\abs{g}}$
and use Fubini's theorem once more to conclude
\begin{align*}
  \eval_\theta \eval_{\abs{g}} \abs{K \cap H_{g,t}}^n \mathds{1}_{\{\abs{g} > t\}}
  & = \eval_{\theta}\int_{\mathbb{R}^n} F_{K,\theta}(s) \eval_{\lvert
    g\rvert} \mathds{1}_{\left[-\frac{t}{\lvert g
        \rvert},\frac{t}{\lvert g\rvert} \right]^n}(s) \mathds{1}_{\{\abs{g} \geq t\}}ds\\
  & = \eval_\theta \big( F_{K,\theta} * \psi_t
  \big) (0).\qedhere
\end{align*}

\end{proof}

We will use the following fundamental theorem on convergence of
approximate identities (see \cite[pg. 27]{Graf14}).
  
\begin{thm}
  \label{thm:approx-id-conv}
  Let $\{\varphi_t\}_{t\in (0,1)}$ be an approximate identity $\real^n$,
  i.e. a family of functions $\varphi_t:\mathbb{R}^n\rightarrow
  \mathbb{R}$ with $\sup_{t\in (0,1)} \int \lvert \varphi _t\rvert
  <\infty$ such that
  \begin{enumerate}\item[(a)] for all $t\in (0,1)$, 
    $\int_{\real^n} \varphi_t  = 1$;
  \item[(b)] for any $\delta > 0$,
    \begin{equation}
      \label{eqn:delta_nbhd}
      \lim_{t \to 0^{+}} \int_{\real^n \setminus \delta B_\infty^n}
      \varphi_t (s) \, ds = 0.
    \end{equation}
  \end{enumerate}
    Then for any $f\in L^{\infty}(\mathbb{R}^n)$ such that $f$ is
    continuous at $0$,
    \begin{equation*}
      (\varphi_t*f)(0)\rightarrow f(0).
    \end{equation*}
 \end{thm}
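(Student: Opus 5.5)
This is the classical approximate-identity theorem, and I will prove it by the standard splitting argument. Use (a) to write
\begin{equation*}
(\varphi_t*f)(0)-f(0)=\int_{\real^n}\varphi_t(s)\bigl(f(-s)-f(0)\bigr)\,ds,
\end{equation*}
which is legitimate because $f\in L^\infty$ and $\varphi_t\in L^1$. Let $A=\sup_{t\in(0,1)}\int\abs{\varphi_t}<\infty$. Fix $\varepsilon>0$. By continuity of $f$ at $0$, choose $\delta>0$ so that $\abs{f(-s)-f(0)}<\varepsilon$ for $s\in\delta B_\infty^n$. This requires $f$ to be a genuine function continuous at $0$, not merely an a.e. class; alternatively, use $\operatorname{ess\,sup}$ over a small cube, which is all the integral sees.

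Split the integral over $\delta B_\infty^n$ and its complement. The inner piece is bounded by $\varepsilon\int\abs{\varphi_t}\le A\varepsilon$. The outer piece is bounded by $2\|f\|_\infty\int_{\real^n\setminus\delta B_\infty^n}\abs{\varphi_t(s)}\,ds$.

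\emph{Main obstacle.} Condition (b) as stated controls $\int\varphi_t$ over the complement, not $\int\abs{\varphi_t}$, so the outer bound needs more than (b) literally gives.
\begin{itemize}
\item In the intended application, $\varphi_t$ is a normalized multiple of $\psi_t$ from Lemma \ref{lem:g>t}, which is nonnegative. I will therefore state and use the theorem for $\varphi_t\ge0$ (as in \cite{Graf14}, where the tail condition is on $\abs{\varphi_t}$). Then $\abs{\varphi_t}=\varphi_t$, the outer term tends to $0$ by (b), and $A=1$.
\item If signed kernels must be kept, I would interpret (b) with $\abs{\varphi_t}$, which is the standard hypothesis.
\end{itemize}

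Taking $\limsup_{t\to0^+}$ gives $\limsup\abs{(\varphi_t*f)(0)-f(0)}\le A\varepsilon$. Since $\varepsilon$ is arbitrary, this completes the proof.
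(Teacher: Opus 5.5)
Your argument is correct and is the standard splitting proof behind the paper's citation of Grafakos, since the paper gives no proof of its own. Your caveat is well founded: (b) must be read with $\abs{\varphi_t}$, as in Grafakos, or with $\varphi_t\ge 0$, which costs nothing because the kernel $\tildee{\psi}_t$ in the application is nonnegative. Indeed, for signed kernels the literal statement already fails on $\real$: $\varphi_t=\frac{1}{2t}\mathds{1}_{[-t,t]}+\mathds{1}_{[2,3]}-\mathds{1}_{[-3,-2]}$ satisfies the $L^1$ bound, (a) and (b), since its tail integrals vanish for $t<\delta$ by symmetry. Yet any bounded continuous $f$ with $f(0)=0$, $f=1$ on $[-3,-2]$ and $f=-1$ on $[2,3]$ gives $(\varphi_t*f)(0)\to 2$.
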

    
We are now ready to prove the proposition.

\begin{proof}[Proof of Proposition \ref{prop:mom-limit}]
  By Lemma \ref{lem:g<t}, it suffices to show
    \begin{equation}
      \label{eqn:goal}
    \lim_{t\rightarrow 0^{+}}\frac{\eval_g \abs{K \cap H_{g,t}}^n
      \mathds{1}_{\{\abs{g}>t\}} }{ (2t)^n \, \gamma_{t}}= \eval_g
    \abs{K\cap g^{\perp}}^n.
  \end{equation}
    With $\psi_t$ given by \eqref{eqn:psi_t}, we let
    \begin{equation*}
      \beta_t = \int_{\mathbb{R}^n} \psi_t(s)ds =
      \int_{\real^n} \eval_{|g|}  \mathds{1}_{\left[-\frac{t}{|g|}, 
	\frac{t}{|g|}\right]^n} (s)  \mathds{1}_{\{\abs{g} > t\}} \,ds
      \end{equation*}
and note that
\begin{align*}
  \beta_t & = \eval_{|g|} \big| \left[ -\tfrac{t}{\abs{g}},\tfrac{t}{\abs{g}}\right]^n\big| \,
  \mathds{1}_{\{\abs{g} > t\}}  = (2t)^n \, \eval_{|g|} |g|^{-n} \,
  \mathds{1}_{\{\abs{g} > t\}} = (2t)^n \, \gamma_{t}.
\end{align*}
Set $\tildee{\psi}_t = \frac{1}{\beta_t}\psi_t$.  Using Lemma
\ref{lem:g>t} and \eqref{eqn:FK0}, our goal \eqref{eqn:goal} can be
restated as
\begin{equation}
  \label{eqn:restate_goal}
  \lim_{t \rightarrow 0^{+}}\eval_{\theta}
  (F_{K,\theta}*\tildee{\psi}_t)(0) = \eval_{\theta} F_{K,\theta}(0).
\end{equation}

We will show that $\tildee{\psi}_t$ is an approximate identity and use
Theorem \ref{thm:approx-id-conv}.  By construction $\int
\tildee{\psi_t} =1$. To see that condition \eqref{eqn:delta_nbhd} is
satisfied, we fix $\delta>0$. We have
  \begin{align*}
    \int_{\real^n \setminus \delta B_\infty^n} \psi_t (s) \, ds
    &=  \eval_{|g|} \int_{\real^n \setminus 
      \delta B_\infty^n} \mathbb{1}_{\left[-\frac{t}{|g|}, 
	\frac{t}{|g|}\right]^n} (s) \, ds
    \, \mathds{1}_{\{\abs{g} > t\}},
  \end{align*}and for $s\in \mathbb{R}^n \backslash \delta B_{\infty}^n$, 
  \begin{align*}
    \mathds{1}_{\left[-\frac{t}{\lvert g \rvert},\frac{t}{\lvert g
          \rvert}\right]^n}(s) = \begin{cases} 0 & \text{ if }
      \frac{t}{\lvert g \rvert} \leq \delta\\ 1 & \text{ if }
      \frac{t}{\lvert g \rvert} > \delta.
    \end{cases}
  \end{align*}
Thus
  \begin{align*}
    \int_{\real^n \setminus \delta B_\infty^n} \psi_t (s) \, ds
    & = \eval_{|g|} \int_{\real^n} \mathbb{1}_{\left[-\frac{t}{|g|}, 
	\frac{t}{|g|}\right]^n \; \setminus \;
      [-\delta,\delta]^n} (s) \, ds
    \, \mathds{1}_{\{t<\abs{g}<t/\delta\}}\\
    &=  \eval_{|g|} \left[ \left( \left( 
      \tfrac{2t}{\abs{g}}\right)^n - (2\delta)^n \right) \,
      \mathds{1}_{\{t < \abs{g} < t/{\delta}\}} \right]\\
    & =  (2t)^n\eval_{\abs{g}} \big[ |g|^{-n}
      \mathds{1}_{\{t < \abs{g} < t/\delta\}} \big]
    - (2\delta)^n \prob \left( t < \abs{g} < t/\delta \right).
  \end{align*}
  Using $\beta_t=(2t)^n\gamma_t$, the normalized sequence
  $\tildee{\psi_t}$ satisfies
  \begin{align*}
    \int_{\real^n \setminus \delta B_\infty^n} \tildee{\psi_t} (s) \, ds
    &= \frac{\gamma_{t} - \gamma_{t/\delta}}{\gamma_t}
    - \frac{(2\delta)^n}{(2t)^n\gamma_t} \prob \left( t < |g| < t/\delta \right).
  \end{align*}
  By Lemma \ref{lem:lim}, the first term tends to $0$ as $t\rightarrow
  0^{+}$. The second term can be estimated as
    \begin{align*}
      \frac{\delta^n}{t^n\gamma_t}\prob \left( t < |g| < t/\delta \right)
      & = \frac{c_n\delta^n}{t^n\gamma_t}
      \int_{t}^{t/\delta} r^{n-1} e^{-r^2/2} \, dr\\
     &\leq
    \frac{c_n \delta^n}{t^n \gamma_t}\cdot
     \frac{1}{n} \left(\frac{t^n}{\delta^n} - t^n\right)\\
     & =  \frac{c_n\delta^n}{n\gamma_t}
     \left(\frac{1}{\delta^n} - 1\right),
  \end{align*}which vanishes as $t\rightarrow 0^{+}$ since $\gamma_t$ is of order $\ln(1/t)$.
    Therefore $\{\tildee{\psi}_t\}_{t>0}$ is an approximate identity.
    on $\mathbb{R}^n$.

Finally, as $K$ is compact, the marginals
$\pi_{[\theta]}(\mathds{1}_K)$ are bounded independently of $\theta$,
hence the same is true of $F_{K,\theta}$. Since
$\{F_{K,\theta}\}_{\theta \in S^{n-1}}$ is uniformly bounded and $\int
\tildee{\psi}_t=1$ for each $t$, the family
$\{F_{K,\theta}*\tildee{\psi_t}:\theta\in S^{n-1}, t\in(0,1)\}$ is
uniformly bounded. Thus by dominated convergence and Theorem
\ref{thm:approx-id-conv},
\begin{align*}
  \lim_{t \to 0} \eval_\theta  \big(F_{K,\theta} * \tildee{\psi}_t \big) (0) = 
  \eval_\theta \lim_{t \to 0} \big(F_{K,\theta} * \tildee{\psi}_t \big) (0) 
  = \eval_\theta F_{K,\theta} (0).
\end{align*}which establishes \eqref{eqn:restate_goal} and completes the proof.
\end{proof}

\begin{proof}[Proof of Theorem \ref{thm:ebie}]
    The inequalities \eqref{eqn:bie-steiner} and \eqref{eq:bie-emp}
    are immediate from Theorem \ref{thm:moments}. We need only verify
    \eqref{eqn:ebie_lim}. By an application of the law of large
    numbers and dominated convergence, for each $t>0$, when
    $N\rightarrow \infty$, we have
    \begin{equation*}
      \lim_{N\rightarrow \infty} \tildee{\Phi}_{t,N} (K) = \eval_g |K\cap
      H_{g,t}|^n,
    \end{equation*}
    An application of Proposition \ref{prop:mom-limit} concludes the
    proof.
    \end{proof}

\subsubsection*{Concluding remarks}

We have defined $\tildee{V}_{\ell,t,N}(K)$ for star bodies $K$ as that
is the most natural setting within dual Brunn--Minkowski theory.  The
proof of Theorem \ref{thm:CEFPP} shows that for $t>0$, $N\in
\mathbb{N}$ and $q\in \mathbb{N}$,
\begin{equation*}
  \eval_{\bf X}\eval_G
  \left(\frac{\abs{K}}{N}\sum_{i=1}^N[X_i\in H_{G,t}]\right)^q \leq
  \eval_{\bf X^*}\eval_G
  \left(\frac{\abs{K}}{N}\sum_{i=1}^N[X_i^*\in H_{G,t}]\right)^q,
  \end{equation*}
 whenever $X_i$ and $X_i^*$ have densities $f$ and $f^*$,
respectively.  Deriving results about sections from the Gaussian slabs
$H_{G,t}$ ultimately relies on some regularity, e.g., when
$f_K=\frac{1}{\abs{K}}\mathds{1}_K$ we used continuity of the marginal
distributions $\pi_E(f_K)$ at the origin in the proof of Proposition
\ref{prop:mom-limit}. However, Theorem \ref{thm:approx-id-conv} is
just one example of a result on approximate identities that could
naturally interface with the empirical approach beyond star bodies.

\subsection{Acknowledgements.}
The first-named author gratefully acknowledges support from the NSF
under grant DMS-2405441 and from AFOSR under grant FA9550-25-1-0284.
The second-named gratefully acknowledges support from NSF Grant
DMS-2105468.
\bibliographystyle{plain} \bibliography{qmi}

\vspace{1cm}

\noindent Grigoris Paouris, Department of Mathematics, Texas A\&M University, and
Department of Mathematics, Princeton University, {\tt grigoris@tamu.edu}\\

\noindent Peter Pivovarov, Mathematics Department, University of Missouri-Columbia, {\tt pivovarovp@missouri.edu}\\

\noindent Paul Simanjuntak, Department of Mathematics, Texas A\&M University,
{\tt simanjuntak@tamu.edu}

\end{document}